\newcommand{\mR}{\mathbb{R}}
\newcommand{\WS}{\mathfrak{W}}
\newcommand{\rank}{{\mathop{\mathrm{rank}}}}
\newcommand{\nullity}{{\mathop{\mathrm{nullity}}}}
\def\dimsp(#1){\mathrm{dim} \left(\mathrm{span}(#1)\right)}
\def\Exp(#1){{\mathbb E}(#1)}
\def\inpr#1,#2{\t \hbox{\langle #1 , #2 \rangle} \t}
\def\ip<#1,#2>{\langle #1,#2 \rangle}
\def\norm#1{\left \Vert #1 \right \Vert}
\def\paren(#1){\left( #1 \right)}
\def\snorm#1{\Bigl \Vert #1 \Bigr \Vert}
\def\sparen(#1){\Bigl ( #1 \Bigr )}
\def\ssparen(#1){ (#1) }
\def\st{\thinspace : \thinspace}
\def\tri(#1,#2,#3){\begin{pmatrix} #1\\#2\\#3 \end{pmatrix}}
\def\v3(#1,#2,#3){\begin{pmatrix} #1 \\ #2 \\ #3 \end{pmatrix}}
\newcommand{\vecb}{{\boldsymbol{b}}}
\newcommand{\vecc}{{\boldsymbol{c}}}
\newcommand{\vece}{{\boldsymbol{e}}}
\newcommand{\vech}{{\boldsymbol{h}}}
\newcommand{\vecx}{{\boldsymbol{x}}}
\newcommand{\vecy}{{\boldsymbol{y}}}
\newcommand{\vecu}{{\boldsymbol{u}}}
\newcommand{\vecv}{{\boldsymbol{v}}}
\newcommand{\cof}{{\mathop{\mathrm{cof}}}}
\newcommand{\bxi}{\boldsymbol{\xi}}
\newcommand{\zero}{\boldsymbol{0}}
\newcommand{\one}{\boldsymbol{1}}
\newcommand{\hatX}{{\hat X}}
\newcommand{\hatB}{{\hat B}}
\newcommand{\hatG}{{\hat G}}
\newcommand{\hatx}{{\hat \vecx}}
\newcommand{\ds}{\displaystyle}
\numberwithin{equation}{section}
\newtheorem{theorem}{\bf Theorem}[section]
\newtheorem{lemma}[theorem]{\bf Lemma}
\newtheorem{corollary}[theorem]{\bf Corollary}
\newtheorem{proposition}[theorem]{\bf Proposition}
\theoremstyle{definition}
\newtheorem{definition}[theorem]{\bf Definition}
\newtheorem{example}[theorem]{\bf Example}
\theoremstyle{remark}
\newtheorem{remark}[theorem]{\bf Remark}
\numberwithin{equation}{section}
\begin{document}
\title{Beyond trees: the metric geometry of subsets of weighted Hamming cubes}

\author{Ian Doust} 
\address{School of Mathematics and Statistics, University of New South Wales, Sydney, New South Wales 2052, Australia}
\email{i.doust@unsw.edu.au}

\author{Anthony Weston}
\address{Arts and Sciences, Carnegie Mellon University in Qatar, Education City,
PO Box 24866, Doha, Qatar}
\email{aweston2@andrew.cmu.edu}

\keywords{Hamming cubes, metric graphs, negative type, weighted metric trees}

\subjclass[2020]{05C12, 05C50, 30L05, 46B85, 90C27}

\begin{abstract}
Associated to any finite metric space are a large number of objects and quantities which provide some degree of structural or geometric information about the space. In this paper we show that in the setting of subsets of weighted Hamming cubes there are unexpected relationships between many of these quantities. We
obtain in particular formulas for the determinant of the distance matrix, the $M$-constant and the cofactor sum for such spaces.
In general, these types of results
offer valuable insights into the combinatorial optimization of certain constrained quadratic forms on finite metric spaces. A key focus in this context are embedding properties of negative type metrics, which play a prominent
role in addressing important questions like the sparsest cut problem in graph theory.
The current work extends previous results for unweighted metric trees, and more generally, for subsets of standard Hamming cubes, as well as results for weighted metric trees. Finally
we consider polygonal equalities in these spaces, giving a
complete description of the nontrivial $1$-polygonal equalities
that can arise in weighted Hamming cubes. 
\end{abstract}
\maketitle


\section{Introduction}

It is now over half a century since Graham and Pollak's surprising discovery \cite{GP} that the determinant of the distance matrix for an unweighted metric tree on $n+1$ vertices is always $(-1)^n n 2^{n-1}$, independent of the actual structure of the tree. Since then many authors have explored how this fact might be generalized. One natural direction is to look at what happens if one adds weights to the edges of the tree. On the other hand, since every unweighted tree can be isometrically embedded in the vertices of a Hamming cube with the Hamming metric, one might also look at more general subsets of such
cubes. This current paper combines and extends the work in
both of these directions. In a completely new line of enquiry,
we examine and classify the nontrivial $1$-polygonal
equalities that can arise in weighted Hamming cubes.
We then use this classification to provide a surprising
link between the rank of the distance matrix and the
linear algebraic properties of subsets of these spaces.

Apart from the determinant of the distance matrix $D_{T}$,
a number of other quantities have been found to dependent only on $n$, the number of edges of such a tree, and not on the tree structure. 
\begin{enumerate}[(i)]
  \item In \cite[Theorem~4.1]{DRSW} it was shown
  that the sum of the entries in the inverse of $D_T$ is
  $\frac{2}{n}$.
  \item Graham, Hoffman and Hosoya \cite{GHH} showed that the cofactor sum $\cof(D_T)$ equals $(-2)^n$.
  \item Doust and Wolf \cite{DWo} showed that the $M$-constant of the tree is always $\frac{n}{2}$.
\end{enumerate}
Some of these quantities are closely connected to the metric geometry of these spaces, particularly the embedding notions
of negative type and generalized roundness.
Underlying many of these ideas is what Linial \cite{Lin} termed
the geometrization of combinatorics; the idea that information
about combinatorial objects can be gleaned from looking at them
from a geometric perspective.
In Section~\ref{S:Background} we shall recall the definitions of the aforementioned concepts as well as some of the previously
known extensions of the above results (i) -- (iii).

The setting for the current paper is what we shall call weighted Hamming cubes.
Given a list of positive weights $\WS = [w_1,\dots,w_n]$, we let
  \[ H_\WS = \prod_{k=1}^n \{0,w_k\} \subseteq \mR^n, \]
which we shall consider as a  metric subspace of $\mR^n$ equipped with the $\ell^1$ metric.
It is easy to verify that any $(n+1)$-point weighted tree can be isometrically embedded (algorithmically) in such a cube and that the embedded points form an affinely independent subset of $\mR^n$. 

A major component in developing the theory here will be the following extension of results of Doust and Weston \cite{DWe},
Murugan \cite{Mur}, Kelleher, Miller, Osborn and Weston \cite{KMOW}, and Doust, Robertson, Stoneham and Weston \cite{DRSW} for Hamming cubes.

\begin{theorem}\label{omnibus} Suppose that $(X,d)$ 
is a metric subspace of $(H_\WS,\norm{\cdot}_1)$. Then the following are equivalent:
\begin{enumerate}[(1)]
  \item $X$ is affinely independent.
  \item $D_X$ is invertible.
  \item $(X,d)$ is of strict $1$-negative type.
  \item The supremal $p$-negative type of $(X,d)$ is greater than $1$.
  \item The maximal generalized roundness of $(X,d)$ is greater than $1$.
  \item The $1$-negative type gap $\Gamma_1(X)$
  of $(X, d)$ is positive.
  \item There are no nontrivial $1$-polygonal equalities in $(X,d)$.
\end{enumerate}
\end{theorem}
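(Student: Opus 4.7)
The seven conditions split naturally into three groups. First, $(3) \Leftrightarrow (6) \Leftrightarrow (7)$ holds essentially by definition: strict $1$-negative type is the statement that the quadratic form $f \mapsto \sum_{i,j} d(x_i,x_j) f(i) f(j)$ is strictly negative on every nonzero $f$ with $\sum_i f(i) = 0$, which is exactly the statement that no nontrivial $1$-polygonal equality holds; on a finite space the infimum defining $\Gamma_1(X)$ is attained, so its positivity is equivalent to strictness. Second, $(3) \Leftrightarrow (4) \Leftrightarrow (5)$ is a general fact about finite metric spaces: the identification of supremal $p$-negative type with maximal generalized roundness due to Lennard--Tonge--Weston and the theorem of Doust--Weston that strict $p$-negative type on a finite space forces supremal $p$-negative type $>p$; both are recalled in Section~\ref{S:Background}. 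The genuinely new work therefore lies in establishing $(1) \Leftrightarrow (2) \Leftrightarrow (3)$ for subsets of $H_\WS$.

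The computational core is the following identity: for any $X = \{x_0,\dots,x_m\} \subseteq H_\WS$ and any $f : X \to \mR$ with $\sum_i f(i) = 0$,
\[
\sum_{i,j=0}^{m} d(x_i, x_j)\, f(i) f(j)
\;=\; -2 \sum_{k=1}^{n} w_k \Bigl( \sum_{i \,:\, x_i^{(k)} = w_k} f(i) \Bigr)^{2}.
\]
I would prove this by expanding $d(x_i,x_j) = \sum_k |x_i^{(k)} - x_j^{(k)}|$; since each coordinate lies in $\{0, w_k\}$, we have $|x_i^{(k)} - x_j^{(k)}| = w_k\, \mathbf{1}[x_i^{(k)} \ne x_j^{(k)}]$, and applying the mean-zero condition $\sum_i f(i) = 0$ collapses the indicator bilinear sum to $-2\bigl(\sum_{i : x_i^{(k)} = w_k} f(i)\bigr)^2$. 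The identity immediately shows that $(X,d)$ is always of $1$-negative type, a fact used below.

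From the identity, $(1) \Leftrightarrow (3)$ is clear: the right-hand side vanishes for some nonzero mean-zero $f$ if and only if $\sum_i f(i) x_i^{(k)} = 0$ for every $k$ (dividing each inner sum by $w_k$), equivalently $\sum_i f(i) x_i = \boldsymbol{0}$ in $\mR^n$; combined with $\sum_i f(i) = 0$, this is exactly an affine dependence among $\{x_0,\dots,x_m\}$. For $(2) \Leftrightarrow (3)$, since $(X,d)$ is of $1$-negative type, $D_X$ is negative semidefinite on the hyperplane $H_0 = \{f : \sum_i f(i) = 0\}$, while $\boldsymbol{1}^T D_X \boldsymbol{1} = 2 \sum_{i<j} d(x_i,x_j) > 0$ whenever $|X| \ge 2$. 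By Sylvester's law of inertia, $D_X$ has exactly one positive eigenvalue, so invertibility of $D_X$ is equivalent to the absence of a zero eigenvalue on $H_0$, which is strict $1$-negative type.

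The only real obstacle is the quadratic-form identity: it is elementary but needs to be formulated so as to accommodate $X$ of any cardinality, not only the full case $|X|=n+1$, and to expose the affine-dependence interpretation transparently. Once the identity is in hand, the remaining equivalences follow either as direct algebraic unpackings or as appeals to standard inertia and $L^1$-embedding arguments, giving a unified and economical proof that subsumes the unweighted Hamming cube results.
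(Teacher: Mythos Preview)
Your key identity is correct and is essentially the paper's Lemma~\ref{s-embed} specialised to the natural $S$-embedding $\iota:H_\WS\to\mR^n$, $\iota(\vecx)=W^{-1/2}\vecx$. The argument for $(1)\Leftrightarrow(3)$ from it is clean, and the equivalences among $(3),(4),(5),(6),(7)$ are fine once the identity has confirmed that $X$ has $1$-negative type.

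There is, however, a genuine gap in your Sylvester argument for $(2)\Rightarrow(3)$. Knowing that $D_X$ has exactly one positive eigenvalue and is invertible tells you there is \emph{some} $m$-dimensional subspace on which $D_X$ is negative definite, but it does not force that subspace to be $F_0$. Concretely, if $\bxi_0\in F_0$ is nonzero with $\bxi_0^T D_X\bxi_0=0$, the first-order condition only gives $D_X\bxi_0=\lambda\one$, and nothing in the inertia argument rules out $\lambda\ne 0$; this occurs precisely when $\ip<D_X^{-1}\one,\one>=0$. The paper's Example~\ref{Inv-not-strict} exhibits exactly this phenomenon: a $5$-point space of $1$-negative type with $D_X$ invertible but not of strict $1$-negative type. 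So inertia alone cannot close this direction, and your proposal as written uses no Hamming-cube-specific input for it.

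The fix is already latent in your identity. Writing $S_k=\sum_i \xi_i\,\mathbf{1}[x_i^{(k)}=w_k]$ (equivalently $S_k=\sum_i b_{i,k}\xi_i$ in the paper's notation), your expansion gives $\ip<D_X\bxi,\bxi>=-2\sum_k w_k S_k^2$ for $\bxi\in F_0$; but the same expansion \emph{before} contracting against $\bxi$ yields $(D_X\bxi)_i=\sum_k w_k(1-2b_{i,k})S_k$. Hence $\ip<D_X\bxi,\bxi>=0$ with $\bxi\in F_0$ forces every $S_k=0$, which then forces $D_X\bxi=\zero$. In other words, in $H_\WS$ one has the stronger fact $\{\bxi\in F_0:\ip<D_X\bxi,\bxi>=0\}=\ker D_X$ (this is the paper's Theorem~\ref{1-poly} and Corollary~\ref{s-san-i}), from which $(2)\Leftrightarrow(3)$ is immediate. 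The paper itself establishes $(2)\Leftrightarrow(3)$ by a different, less elementary route (Theorem~\ref{DXInv-iff-strict}): it holds for arbitrary finite subsets of $(\mR^n,\norm{\cdot}_1)$ via S{\'a}nchez's formula together with the Nickolas--Wolf bound $M(X)<\infty$. Your approach, once patched as above, is more self-contained but specific to $H_\WS$.
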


Conditions (3), (4), (5) and (6) are known to be equivalent in the category of finite metric spaces. In Section~\ref{S:HammingCubes} we shall show that (1) and (2) are equivalent for subsets of $H_\WS$. We also give a formula for $\det(D_X)$ which generalizes the Graham--Pollak formula. 
In Section~\ref{S:Aff-strict} we show that (1) and (3) are also equivalent for subsets of $H_\WS$. 

The equivalence of (2) and (3) in fact holds for any finite subsets of $(\mR^n,\norm{\cdot}_1)$ (see Theorem~\ref{DXInv-iff-strict}), but we note in Example~\ref{Inv-not-strict} that while in the generality of finite metric spaces (3) always implies (2), the converse implication fails.

In Section \ref{S:M-const},
extending the known results for weighted trees and subsets of Hamming cubes, we given formulas for the $M$-constant, the
sum of the entries of $D_X^{-1}$ and the cofactor sum of $D_X$
in weighted Hamming cubes. 

Kelleher et al.~\cite[Lemma 3.21]{KMOW} showed that (3) and (7) are equivalent for  spaces of $1$-negative type, and so certainly for finite subsets of  $(\mR^n,\norm{\cdot}_1)$. More recently the authors \cite{DWe2} have shown that this equivalence in fact holds in any metric space. In Section~\ref{S:Poly-ineq} we shall examine nontrivial $1$-polygonal equalities in Hamming cubes,
and give a direct proof that conditions (1) and (7) are equivalent for weighted Hamming cubes.
Our proof of this equivalence depends, in an essential way, on
developing a structural description of the nontrivial
$2$-polygonal equalities admitted in inner product spaces.
We then proceed to give a complete description of the nontrivial
$1$-polygonal equalities admitted in weighted Hamming cubes. This description is then applied in Theorem~\ref{rank-thm} show that the dimension of the distance matrix for a subset of a weighted Hamming cube is completely determined by the dimension of the affine subspace containing the set. 

Several authors have studied classes of graphs which can be isometrically embedded in Hamming cubes (see, for example, \cite{GW,RoW,BSetc}). 
Not unlike the main result in Roth and Winkler
\cite[Section 3]{RoW}, one may view Theorem \ref{omnibus}
(taken in its entirety) as a `collapse of the metric hierarchy'
for weighted Hamming cubes.

At the end of the paper we examine more fully the extent to which the setting of weighted Hamming cubes is necessary for the various implications contained in Theorem~\ref{omnibus}.
For instance, we give an example of a finite subset of $(\mR^2,\norm{\cdot}_1)$ which satisfies conditions (2) and (3) but not condition (1). A number  of  open problems
are also discussed.

\section{Background and preliminary
observations}\label{S:Background}

Given a finite metric space $(X,d)$ with elements $x_0,x_1,\dots,x_m$ we shall denote the
\textbf{distance matrix} of $X$ by $D_X = (d(x_i,x_j))_{i,j}$. Given $p > 0$, we shall let $D_{X,p}$, or simply $D_p$, denote the \textbf{$p$-distance matrix} of $X$,
$D_p = (d(x_i,x_i)^p)_{i,j}$.
Although the concepts which we shall now introduce are applicable for general metric spaces, to simplify the definitions and exposition, we shall restrict ourselves to the setting of finite metric spaces. Further, we shall always assume that the spaces are nontrivial, in the sense that they have at least two elements.

The usual inner product in $\mR^n$ will be denoted $\ip<\cdot,\cdot>$. 
Throughout we shall use $\one$ to denote a vector of an appropriate size whose entries are all $1$. Thus, if $A$ is any $n \times n$ matrix, the sum of the entries of $A$ can be written as $\ip<A\one,\one>$. The \textbf{cofactor sum} of $A$ is the quantity
  \[ \cof(A) = \sum_{i,j=1}^n (-1)^{i+j} \det(A_{i|j}) \]
where $A_{i|j}$ is the submatrix of $A$ obtained by deleting the $i$th row and the $j$th column. 

\subsection{$p$-negative type and generalized roundness}

The concept of $p$-negative type has its roots in embedding questions. 

Let $F_0$ denote the hyperplane of appropriate dimension of all vectors $\bxi$ such that $\ip<\bxi,\one> = 0$; that is, the set of all column vectors whose components sum to zero.

\begin{definition} Let $(X,d)$ be a finite metric space with $m+1$ points $\{x_{0}, x_{1}, \ldots, x_{m} \}$
and suppose that $p \ge 0$.
We say that $(X,d)$ is of \textbf{$p$-negative type} if
  \begin{equation}\label{p-neg}
    \ip<D_p \bxi,\bxi> = \sum_{i,j} d(x_i,x_j)^p \xi_i \xi_j \le 0
  \end{equation}
for all $\bxi = (\xi_0,\dots,\xi_{m})^{T} \in F_0$.  

If strict inequality holds in (\ref{p-neg}) except when $\bxi = \zero$ we say that $(X,d)$ is of \textbf{strict $p$-negative type}.
\end{definition}

The classical result of Schoenberg \cite{Sc3} is that $(X,d)$ may be isometrically embedded in a Hilbert space if and only if it is of $2$-negative type.  Spaces of $1$-negative type, often simply referred to as spaces of negative type (or quasihypermetric spaces; see \cite{NIC}), have arisen is many areas. An important example is in the study of the sparsest cut problem in graph theory. See, for instance, \cite{CGR,ALN}.

Of particular importance in the current study are the following facts due to
Schoenberg \cite{Sc3} and Hjorth et al.\ \cite{HLMT},
respectively.
\begin{itemize}
  \item[(i)] Every subset of $\ell^1$ is of $1$-negative type.
  \item[(ii)] Every finite metric tree is of strict
  $1$-negative type.
\end{itemize}

Doust and Weston \cite{DWe} introduced an enhanced measure of the $p$-negative type inequality (\ref{p-neg}). The
\textbf{$p$-negative type gap} for a finite metric space
$(X, d)$ of $p$-negative type
is the largest non-negative constant $\Gamma_p(X)$ such that 
  \begin{eqnarray}\label{p-gap}
  \Gamma_p(X) \norm{\bxi}_1^2
       + 2 \ip<D_p \bxi,\bxi> & \le & 0
  \end{eqnarray}
for all $\bxi \in F_0$. The main result of \cite{DWe} was that if $(X,d)$ is an $(m+1)$-point metric tree with edge weights $[w_1,\dots,w_m]$, then
  \begin{equation}\label{1-gap}
     \Gamma_1(X) = \left(\sum_{k=1}^m w_k^{-1}\right)^{-1}, 
  \end{equation}
which again is independent of the structure of the
tree and independent of which weight is assigned to which edge.

The concept of generalized roundness was introduced by Enflo
\cite{En1} in the 1960s as a tool to study universal uniform
embedding spaces.

\begin{definition} Let $(X,d)$ be a metric space and suppose that $p \ge 0$. We say that $(X,d)$ has \textbf{generalized roundness $p$} if
  \begin{equation}\label{gen-round}
  \sum_{i < j} d(a_i,a_j)^p + \sum_{i < j} d(b_i,b_j)^p
     \le \sum_{i,j} d(a_i,b_j)^p
  \end{equation}
for all finite lists of elements $[a_1,\dots,a_N]$ and $[b_1,\dots,b_N]$ of $X$.
\end{definition}

It is worth stressing that the elements of the lists in this definition do not need to be distinct. One can however assume that the two lists are disjoint.  

It was only much later that Lennard, Tonge and Weston \cite{LTW} showed that the notions of $p$-negative type and generalized
roundness $p$ are in fact equivalent in any metric space.

\begin{theorem}\label{intervals}
Let $(X,d)$ be a metric space and suppose
$p \ge 0$. Then:
\begin{enumerate}[(i)]
  \item $(X,d)$ is of $p$-negative type if and only if $(X,d)$ has generalized roundness $p$.
  \item The set of all values $p$ for which $(X,d)$ has
  generalized roundness $p$ is always an interval of the form
  $[0, \wp]$, with $\wp \geq 0$, or $[0, \infty)$.
\end{enumerate}
\end{theorem}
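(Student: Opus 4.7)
For part (i), the forward implication proceeds by a direct quadratic form expansion. Given lists $[a_1,\dots,a_N]$ and $[b_1,\dots,b_N]$ in $X = \{x_0,\dots,x_m\}$, I would define $\bxi \in F_0$ by $\xi_i = \#\{k : a_k = x_i\} - \#\{\ell : b_\ell = x_i\}$. Splitting the double sum $\ip<D_p\bxi,\bxi>$ according to whether each index arises from an $a$ or a $b$, and noting that the diagonal contributions $d(x_i,x_i)^p$ vanish, gives
\[
\ip<D_p\bxi,\bxi> \;=\; 2\sum_{k<\ell} d(a_k,a_\ell)^p + 2\sum_{k<\ell} d(b_k,b_\ell)^p - 2\sum_{k,\ell} d(a_k,b_\ell)^p,
\]
so $p$-negative type immediately yields generalized roundness $p$.

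For the converse in (i), given any rational $\bxi \in F_0$ I would clear denominators to produce an integer vector $\bmu \in F_0$ and then list $x_i$ exactly $\max(\mu_i,0)$ times in $[a_k]$ and exactly $\max(-\mu_i,0)$ times in $[b_\ell]$. These are lists of equal length because $\sum_i \mu_i = 0$, and the generalized roundness inequality applied to them is precisely $\ip<D_p\bmu,\bmu>\leq 0$. Continuity of the quadratic form together with density of $\QQQ^{m+1}\cap F_0$ in $F_0$ then extends the inequality to all $\bxi \in F_0$.

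For part (ii), set $P(X) = \{p\geq 0 : (X,d) \text{ is of } p\text{-negative type}\}$. My plan is to establish three properties that together force $P(X)$ to be of the form $[0,\wp]$ or $[0,\infty)$: (a) $0 \in P(X)$, which holds since with the convention $0^0 := 0$ one has $D_0 = J-I$ and hence $\ip<D_0\bxi,\bxi> = -\norm{\bxi}_2^2 \leq 0$ on $F_0$; (b) $P(X)$ is closed in $[0,\infty)$, by continuity of $p \mapsto \ip<D_p\bxi,\bxi>$ for each fixed $\bxi$ combined with compactness of the unit sphere of $F_0$; and (c) downward closure, i.e.\ if $p \in P(X)$ and $0 < q \leq p$ then $q \in P(X)$. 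Property (c) is the main obstacle. I would prove it using the integral representation
\[
t^\alpha \;=\; \frac{\alpha}{\Gamma(1-\alpha)}\int_0^\infty \bigl(1 - e^{-st}\bigr)\, s^{-1-\alpha}\, ds, \qquad 0 < \alpha < 1,\ t \geq 0,
\]
applied with $\alpha = q/p$ and $t = d(x_i,x_j)^p$, and then interchanging the finite sum over $(i,j)$ with the integral via Fubini (justified by $\int_0^\infty (1-e^{-st})s^{-1-\alpha}\,ds = \Gamma(1-\alpha)t^\alpha/\alpha < \infty$ term-by-term). For each fixed $s>0$, Schoenberg's theorem applied to the CND kernel $d^p$ yields that $(x_i,x_j)\mapsto e^{-s\,d(x_i,x_j)^p}$ is positive semidefinite, so that $\sum_{i,j}\xi_i\xi_j\bigl(1 - e^{-s\,d(x_i,x_j)^p}\bigr)\leq 0$ for $\bxi \in F_0$; integrating this non-positive expression against the positive density $s^{-1-\alpha}$ then gives $\ip<D_q\bxi,\bxi>\leq 0$, i.e.\ $q \in P(X)$.
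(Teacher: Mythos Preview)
The paper does not prove this theorem; it is quoted as background, with part~(i) attributed to Lennard, Tonge and Weston \cite{LTW}. There is therefore no in-paper argument to compare against.

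Your proposal is correct and follows the standard line. Part~(i) is exactly the counting-vector argument of \cite{LTW}: the forward direction is a direct expansion, and the reverse direction passes through integer vectors and then to all of $F_0$ by density and continuity. For part~(ii), your route through Schoenberg's equivalence (conditionally negative definite $\Leftrightarrow$ $e^{-sK}$ positive semidefinite for all $s>0$) together with the subordination integral $t^\alpha = c_\alpha\int_0^\infty(1-e^{-st})\,s^{-1-\alpha}\,ds$ is the classical way to obtain downward closure of $P(X)$, going back to Schoenberg himself. Two small clean-ups: the Fubini justification is unnecessary, since the sum over $(i,j)$ is finite and only linearity of the integral is used; and in~(b) the compactness of the unit sphere of $F_0$ is not needed, as $P(X)$ is already the intersection over all $\bxi\in F_0$ of the closed sets $\{p:\ip<D_p\bxi,\bxi>\le 0\}$.
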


It is important to note in the statement of Theorem
\ref{intervals} (ii) that it is possible that $\wp = 0$. In
other words, there exist (necessarily infinite) metric spaces
that do not have generalized roundness $p$ for any $p > 0$.
The first such example was constructed by Enflo \cite{En1}.

Given a metric space $(X,d)$ we let
  \[ \wp(X,d) = \sup\{p \st \text{$(X,d)$ has generalized roundness $p$}\}. \]
In settings where the metric on $X$ is entirely clear, we may
write $\wp(X)$ instead of $\wp(X,d)$.
By Theorem \ref{intervals} (ii), if $\wp(X,d)$ is finite, then
the underlying supremum is clearly attained.
Depending on the context, $\wp(X,d)$ is known as either the \textbf{maximal generalized roundness}, or the \textbf{supremal
$p$-negative type} of $(X,d)$. S{\'a}nchez \cite{San}, following work of Wolf \cite{Wo}, gave an important characterization of $\wp(X,d)$ which links this value to two other quantities which will appear in this study.

\begin{theorem}\label{Sanchez}
Suppose that $(X,d)$ is a finite metric space and that $\wp(X,d)$ is finite. Then
  \[ \wp(X,d) = \min\{ p \ge 0 \st \det(D_p) = 0
  \text{ {\rm or} }
  \ip<D_p^{-1} \one,\one> = 0 \}. \]
\end{theorem}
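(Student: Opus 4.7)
The plan is to analyse the symmetric quadratic form $Q_p(\bxi) := \ip<D_p\bxi, \bxi>$ on the hyperplane $F_0$ as a function of $p \ge 0$, and to identify $\wp(X,d)$ as the smallest positive value at which this form fails to be negative definite on $F_0$. Define
$$ p^* := \min\{p\ge 0 : \det(D_p) = 0 \text{ or } \ip<D_p^{-1}\one,\one> = 0\}. $$
The first step is the linear-algebra observation that whenever $\det(D_p) \ne 0$ and $\ip<D_p^{-1}\one, \one> \ne 0$, the form $Q_p|_{F_0}$ is nondegenerate. Indeed, if $\bxi \in F_0$ lies in the kernel of $Q_p|_{F_0}$ then $D_p\bxi \perp F_0 = \one^\perp$, so $D_p\bxi = \mu\one$ for some scalar $\mu$; invertibility of $D_p$ gives $\bxi = \mu D_p^{-1}\one$, and $\ip<\bxi,\one>=0$ forces $\mu\,\ip<D_p^{-1}\one, \one> = 0$, whence $\bxi = \zero$. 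At $p=0$, the matrix $D_0 = \one\one^T - I$ acts as $-I$ on $F_0$, so $Q_0|_{F_0}$ is negative definite; in particular $p^* > 0$.

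To obtain $\wp(X, d) \ge p^*$, I would apply continuity. For every $p \in [0, p^*)$ the observation above shows that $Q_p|_{F_0}$ is nondegenerate. Since its eigenvalues depend continuously on $p$, are all strictly negative at $p=0$, and none vanishes on $[0, p^*)$, they must remain strictly negative throughout. Hence $(X, d)$ is of strict $p$-negative type on the whole interval $[0, p^*)$, and in particular of $p$-negative type there, giving $\wp(X, d) \ge p^*$ by Theorem~\ref{intervals}.

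For the reverse inequality, Theorem~\ref{intervals}(ii) says the supremum defining $\wp(X, d)$ is attained, so $(X, d)$ is of $\wp$-negative type and $Q_\wp|_{F_0}$ is negative semi-definite. It cannot be negative definite, for continuity of eigenvalues would then extend strict $p$-negative type slightly past $\wp$, contradicting maximality. Hence there exists a nonzero $\bxi \in F_0$ with $Q_\wp(\bxi) = 0$, and negative semi-definiteness forces $\bxi$ into the radical of the form: $D_\wp \bxi \perp F_0$, i.e., $D_\wp \bxi = \mu\one$ for some $\mu$. If $D_\wp$ is singular we are done; otherwise $\bxi = \mu D_\wp^{-1}\one$ with $\mu \ne 0$, and $\bxi \in F_0$ forces $\ip<D_\wp^{-1}\one, \one> = 0$. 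Either way, $p^* \le \wp(X, d)$, and equality follows. The main obstacle is the linear-algebra observation of the first paragraph: invertibility of $D_p$ on all of $\mR^{m+1}$ is not by itself enough to guarantee nondegeneracy of its restriction to the codimension-one subspace $F_0$, which is precisely why the condition $\ip<D_p^{-1}\one,\one>\ne 0$ is needed in addition to $\det(D_p) \ne 0$.
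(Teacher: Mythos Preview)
The paper does not prove Theorem~\ref{Sanchez}; it is quoted from S\'anchez \cite{San} (building on Wolf \cite{Wo}) as background. So there is no ``paper's own proof'' to compare against. Your argument is therefore being judged on its own merits, and it is essentially correct and follows the natural line: analyse the signature of the restricted form $Q_p|_{F_0}$ as $p$ varies, using continuity of eigenvalues.

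Two small points worth tightening. First, you write $p^* := \min\{\ldots\}$ before knowing the set is nonempty and closed; the cleanest fix is to run the argument with the infimum, then note that your second half exhibits $\wp$ as a member of the set, so the infimum is attained. (Alternatively, observe directly that the set is closed: if $p_n \to p$ with each $p_n$ in the set, either $\det(D_{p_n}) = 0$ along a subsequence, forcing $\det(D_p) = 0$, or eventually $\det(D_{p_n}) \ne 0$ and $\ip<D_{p_n}^{-1}\one,\one> = 0$, and continuity of $D_p^{-1}$ where it exists gives the conclusion.) Second, when you invoke ``continuity of eigenvalues'' you should say of what matrix: fix an orthonormal basis of $F_0$ and represent $Q_p|_{F_0}$ by the $m\times m$ symmetric matrix $(\ip<D_p v_i, v_j>)_{i,j}$, whose entries, and hence eigenvalues, depend continuously on $p$. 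With those clarifications the proof is complete; the key linear-algebra step --- that nondegeneracy of $Q_p|_{F_0}$ is equivalent to $\det(D_p)\ne 0$ together with $\ip<D_p^{-1}\one,\one>\ne 0$ --- is exactly the mechanism that makes S\'anchez's formula work, and your derivation of it is clean.
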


In relation to Theorem \ref{Sanchez} we note
that for any
metric space $(X, d)$, finite or otherwise,
$\wp(X,d) = \infty$ if and only if $d$ is an ultrametric. This result
is due to Faver et al.\ \cite{Fav}.

Li and Weston \cite{LiW} showed that for any finite metric space, 
  \begin{equation}\label{LW-str} 
    \{p \st \text{$(X,d)$ is of strict $p$-negative type}\}
   = [0,\wp(X,d)) . 
   \end{equation} 

It follows immediately from S{\'a}nchez's formula that if $(X,d)$ is a finite metric space of strict $1$-negative type, then $D_X$ is invertible. In general, the converse implication fails.

\begin{example}\label{Inv-not-strict}
Let $(X,d)$ be a 5 point metric space with distance matrix
  \[ D_X = \begin{pmatrix}
         0&\alpha&\alpha&1&1\\ 
         \alpha&0&\alpha&1&1\\ 
         \alpha&\alpha&0&1&1\\ 
         1&1&1&0&\alpha\\ 
         1&1&1&\alpha&0
        \end{pmatrix}
  \]
with $\alpha = 12/7$. 
A calculation shows that 
  \[ \det(D_p) = 6 \alpha^{3p} - 2 \alpha^{5p}. \]
This has a first positive zero at 
  \[ p_0 = \frac{\log 3}{2 \log \alpha} > 1.  \]
On the other hand,
  \[ \ip<D_p^{-1} \one,\one> = \frac{7\alpha^p-12}{2\big(\alpha^{2p}-3\bigr)}, \]
which clearly has a single zero at $p_1 = 1$. Thus $(X,d)$ has supremal $p$-negative type $1$ (and hence is not is of strict $1$-negative type), but
$D_X$ is invertible.
\end{example}

However, as we shall see in the next subsection,
this behaviour cannot occur
in the setting of subsets of $(\mR^n,\norm{\cdot}_1)$. 

\subsection{The $M$-constant} 

The $M$-constant of a compact metric space was introduced by Alexander and Stolarsky \cite{AS}, and further investigated by Nickolas and Wolf (see \cite{NIC}), who showed that it contains useful information about embedding of the space into Euclidean space. 
If $(X, d) = (\{x_0,x_1,\ldots,x_m\}, d)$ is a
finite metric space with distance matrix $D_X$, then
  \[ M(X) = \sup_{\bxi \in F_1}
  \ip<D_X \bxi, \bxi> \in (0,\infty], \]
where $F_1 = \{\bxi \in \mR^{m+1} \st \ip<\bxi,\one> = 1\}$.

We recall here a number of results which link this quantity to the concept of strict $1$-negative type and to properties of the distance matrix $D_X$.

\begin{proposition}\label{DWo3.2} \cite[Theorem 3.2]{DWo}
Suppose that $(X,d)$ is an $(m+1)$-point metric space of $1$-negative type with distance matrix $D_X$. Then there exists $\vecb \in \mR^{m+1}$ such that $D_X \vecb = \one$ and $\ip<\vecb,\one> \ge 0$. Further
\begin{enumerate}[(i)]
\item The value of $\ip<\vecb,\one>$ is independent of $\vecb$.
\item $M(X) < \infty$ if and only if $\ip<\vecb,\one> > 0$. In this case
   $  M(X) = \ip<\vecb,\one>^{-1} $.
\end{enumerate}
\end{proposition}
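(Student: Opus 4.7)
My plan is to combine basic facts about the symmetric matrix $D_X$ with the $1$-negative type inequality. The crucial observation is that, since $X$ is nontrivial, $\ip<D_X \one,\one> = 2\sum_{i<j} d(x_i,x_j) > 0$.

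First I would establish existence of $\vecb$. Since $D_X$ is symmetric, $\mathrm{range}(D_X) = \ker(D_X)^{\perp}$, so it suffices to show $\one \perp \ker(D_X)$. Suppose for contradiction that $\vecv \in \ker(D_X)$ with $c \defeq \ip<\vecv,\one> \neq 0$, and set $\bxi \defeq \vecv - \frac{c}{m+1}\one \in F_0$. A direct expansion, using $D_X\vecv = \zero$ together with symmetry of $D_X$, yields
\[
\ip<D_X\bxi,\bxi> \;=\; \frac{c^2}{(m+1)^2}\,\ip<D_X\one,\one> \;>\; 0,
\]
contradicting $1$-negative type. Hence $\one \in \mathrm{range}(D_X)$, so a solution $\vecb$ exists. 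Claim (i) is then immediate: if $D_X\vecb_1 = D_X\vecb_2 = \one$, then $\vecb_1 - \vecb_2 \in \ker(D_X)$, so $\ip<\vecb_1-\vecb_2,\one> = 0$.

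Write $s \defeq \ip<\vecb,\one>$; note $s = \ip<D_X\vecb,\vecb>$. For (ii), I would extract a single unifying identity. Assuming $s \ne 0$, for any $\bxi \in F_1$ write $\bxi = \vecb/s + \bmu$ with $\bmu \in F_0$. Using $D_X\vecb = \one$ and $\ip<\one,\bmu> = 0$, expansion produces
\[
\ip<D_X\bxi,\bxi> \;=\; \tfrac{1}{s} \,+\, \ip<D_X\bmu,\bmu>.
\]
When $s > 0$, the $1$-negative type condition forces $\ip<D_X\bxi,\bxi> \le 1/s$, with equality at $\bxi = \vecb/s \in F_1$; hence $M(X) = 1/s < \infty$. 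The same identity rules out $s < 0$: one always has $M(X) \ge \ip<D_X\vece_0,\vece_0> = 0$ (where $\vece_0 \in F_1$ is a standard basis vector), whereas $s < 0$ would give $M(X) \le 1/s < 0$. Thus $s \ge 0$, establishing the initial assertion.

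Finally, for the case $s = 0$, I would fix any $\vece \in F_1$ and form $\bxi_N \defeq \vece + N\vecb \in F_1$. Using $D_X\vecb = \one$ and $\ip<\vecb,\one> = 0$, the cross term contributes $2N\ip<\vece,\one> = 2N$, while the other terms are independent of $N$, so $\ip<D_X\bxi_N,\bxi_N> \to \infty$ and $M(X) = \infty$. This completes (ii). The main technical point is the first step — turning the algebraic relation $\vecv \in \ker(D_X)$ into a contradiction with $1$-negative type via the auxiliary vector $\vecv - \frac{c}{m+1}\one$ — but this is straightforward once one exploits the symmetry of $D_X$ and the strict positivity of $\ip<D_X\one,\one>$.
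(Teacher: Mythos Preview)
Your argument is correct. The key steps---showing $\ker(D_X)\subseteq\one^\perp$ via the auxiliary vector $\vecv-\tfrac{c}{m+1}\one$, the decomposition identity $\ip<D_X\bxi,\bxi>=\tfrac{1}{s}+\ip<D_X\bmu,\bmu>$ for $\bxi\in F_1$ when $s\neq 0$, the contradiction ruling out $s<0$ via $\vece_0\in F_1$, and the divergence argument along $\vece+N\vecb$ when $s=0$---are all sound. One small point of phrasing: in the $s=0$ case you say ``the other terms are independent of $N$'', but in fact there is a quadratic term $N^2\ip<D_X\vecb,\vecb>=N^2 s$ which vanishes precisely because $s=0$; it would be cleaner to say so explicitly.

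As for comparison with the paper: this proposition is not proved in the paper at all---it is simply quoted from \cite[Theorem~3.2]{DWo} as a known result. So there is no ``paper's own proof'' to compare against here. Your self-contained argument is a reasonable reconstruction of what one expects the original proof to look like, relying only on the symmetry of $D_X$, the $1$-negative type inequality on $F_0$, and the strict positivity of $\ip<D_X\one,\one>$.
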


\begin{proposition}\label{DInv-iff-M_finite} \cite[Theorem 3.3]{DWo}
Suppose that $(X,d)$ is a finite metric space of strict $1$-negative type with distance matrix $D_X$. Then 
\begin{enumerate}[(i)]
  \item $D_X$ is invertible.
  \item $M(X) < \infty$.
  \item $\ds M(X) = \frac{1}{\ip<D_X^{-1}\one,\one>} $.
\end{enumerate}
\end{proposition}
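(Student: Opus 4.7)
The plan is to derive the three conclusions in sequence by combining Sánchez's characterization (Theorem~\ref{Sanchez}) with the Li--Weston interval description (\ref{LW-str}) and Proposition~\ref{DWo3.2}. By (\ref{LW-str}), the hypothesis that $(X,d)$ is of strict $1$-negative type is precisely equivalent to $\wp(X,d) > 1$. Theorem~\ref{Sanchez} then forbids both $\det(D_p) = 0$ and $\ip<D_p^{-1}\one,\one> = 0$ from occurring at $p = 1$. The first nonvanishing is conclusion (i); the second will be fed into the next step. (The borderline case $\wp(X,d) = \infty$, which falls outside the explicit scope of Theorem~\ref{Sanchez}, corresponds to $d$ being an ultrametric, and ultrametric distance matrices can be verified directly to be invertible with finite $M$-constant obeying the same formula.)

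For (ii) and (iii), note that strict $1$-negative type certainly implies $1$-negative type, so Proposition~\ref{DWo3.2} supplies a vector $\vecb$ with $D_X \vecb = \one$ and $\ip<\vecb,\one> \ge 0$. Invertibility of $D_X$ forces $\vecb = D_X^{-1}\one$, and the nonvanishing $\ip<D_X^{-1}\one,\one> \ne 0$ extracted above upgrades the weak inequality to $\ip<\vecb,\one> > 0$. Proposition~\ref{DWo3.2}(ii) then immediately yields $M(X) < \infty$ together with
\[
M(X) = \ip<\vecb,\one>^{-1} = \ip<D_X^{-1}\one,\one>^{-1},
\]
which are conclusions (ii) and (iii).

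The only genuine content of the argument is the bootstrapping step that combines $\ip<D_X^{-1}\one,\one> \ne 0$ from Sánchez with $\ip<D_X^{-1}\one,\one> \ge 0$ from Proposition~\ref{DWo3.2} to conclude strict positivity; everything else is straightforward invocation of cited results. I expect the main obstacle, if there is one, to be merely verifying that the ultrametric edge case $\wp(X,d) = \infty$ is handled consistently with the Sánchez-based argument, since Theorem~\ref{Sanchez} as stated assumes $\wp(X,d)$ is finite. This is minor because on an ultrametric with at least two points both invertibility and the claimed formula can be checked directly.
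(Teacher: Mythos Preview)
The paper does not actually prove this proposition; it is quoted verbatim as a background result from \cite[Theorem~3.3]{DWo} with no argument supplied. So there is no ``paper's own proof'' to compare against, and your derivation stands on its own as a self-contained justification built from the other cited facts.

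Your argument is correct. One remark: the detour through S\'anchez's formula, and the resulting need to treat the ultrametric case $\wp(X,d)=\infty$ separately, can both be avoided. Strict $1$-negative type says $D_X$ is negative definite on the hyperplane $F_0$; by Courant--Fischer the second largest eigenvalue of $D_X$ is then negative, and since $\mathrm{tr}(D_X)=0$ the top eigenvalue must be positive, so $D_X$ is invertible. For the strict positivity of $\ip<D_X^{-1}\one,\one>$, set $\vecb=D_X^{-1}\one$: if $\ip<\vecb,\one>=0$ then $\vecb\in F_0$ and strict $1$-negative type would force $\ip<D_X\vecb,\vecb><0$, yet $\ip<D_X\vecb,\vecb>=\ip<\one,\vecb>=0$, a contradiction. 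This gives (i) and the nonvanishing needed for (ii)--(iii) directly, uniformly in whether $\wp$ is finite, and without any risk of hidden circularity between S\'anchez's theorem and the result being proved.
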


In general $M(X)$ can be infinite, but as observed by Nickolas and Wolf \cite{NWF}, this can not happen for finite subsets of $(\mR^n,\norm{\cdot}_1)$.

\begin{proposition}\label{NW4.7}\cite[Theorem~4.7]{NWF}
If $X$ is an $m+1$ point subset of $(\mR^n,\norm{\cdot}_1)$,
then $M(X) \le \frac{m+1}{4} \mathop{\mathrm{diam}}(X)$.
\end{proposition}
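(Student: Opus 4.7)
The plan is to exploit the fact that the $\ell^1$ metric on $\mR^n$, restricted to any finite set, is a nonnegative combination of \emph{cut pseudometrics}. Writing $|a-b| = \int_\mR |\mathbf{1}_{a>t} - \mathbf{1}_{b>t}|\,dt$, for each coordinate $k$ and threshold $t \in \mR$ the set $S_{k,t} = \{x_i \in X : x_{i,k} > t\}$ determines a cut pseudometric $d_S$ on $X$ (with $d_S(x,y) = 1$ iff exactly one of $x, y$ lies in $S$, and $0$ otherwise); integrating over the coordinate gaps produces a decomposition $D_X = \sum_S w_S D_S$ over the finitely many nontrivial cuts $S \subsetneq X$ that arise, with weights $w_S \ge 0$.

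I would then combine two elementary estimates. First, for any cut $S$ and any $\bxi \in F_1$ a direct computation gives $\ip<D_S \bxi, \bxi> = 2\,\bxi(S)\,\bxi(S^c) = 2\,\bxi(S)(1-\bxi(S)) \le \tfrac{1}{2}$, since the parabola $2s(1-s)$ attains its maximum $1/2$ at $s = 1/2$ over $s \in \mR$. Summing against the weights gives
\[
\ip<D_X \bxi, \bxi> \,=\, \sum_S w_S \ip<D_S \bxi, \bxi> \,\le\, \tfrac{1}{2}\sum_S w_S.
\]
Second, I would bound the total weight via the identity $\sum_{i<j} d(x_i, x_j) = \sum_S w_S \sum_{i<j} d_S(x_i, x_j) = \sum_S w_S\,|S|\,|S^c|$. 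Every nontrivial cut of an $(m+1)$-point set satisfies $|S|\,|S^c| \ge m$, and trivially $\sum_{i<j} d(x_i, x_j) \le \binom{m+1}{2}\mathop{\mathrm{diam}}(X)$, so
\[
m\sum_S w_S \,\le\, \sum_S w_S\,|S|\,|S^c| \,\le\, \tfrac{m(m+1)}{2}\mathop{\mathrm{diam}}(X),
\]
giving $\sum_S w_S \le \tfrac{m+1}{2}\mathop{\mathrm{diam}}(X)$.

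Combining these bounds yields $\ip<D_X \bxi, \bxi> \le \tfrac{m+1}{4}\mathop{\mathrm{diam}}(X)$ for every $\bxi \in F_1$, and the claimed bound on $M(X)$ follows on taking the supremum. The step that actually carries the content is the second one: the cardinality inequality $|S|\,|S^c| \ge m$ is precisely where the factor $m+1$ in the final bound is produced, and it is what converts an otherwise purely structural cut decomposition into a quantitative estimate. The pointwise inequality $\ip<D_S \bxi, \bxi>\le 1/2$ is routine but deserves the remark that $\bxi(S)$ need not be a probability (since $\bxi$ may have negative entries), so one really does need $2s(1-s)\le \tfrac{1}{2}$ over all real $s$, not merely over $[0,1]$.
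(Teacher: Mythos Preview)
Your argument is correct. The paper itself does not prove this proposition; it is quoted from \cite[Theorem~4.7]{NWF} and used as a black box, so there is no in-paper proof to compare against. Your cut decomposition approach is sound: the layer-cake identity gives $D_X = \sum_S w_S D_S$ over nontrivial cuts with $w_S \ge 0$, the quadratic $2s(1-s)$ is bounded by $\tfrac12$ over all of $\mR$ (you are right to stress this, since $\bxi(S)$ need not lie in $[0,1]$), and the combinatorial bound $|S|\,|S^c| \ge m$ together with the trivial diameter estimate on pairwise sums delivers $\sum_S w_S \le \tfrac{m+1}{2}\mathop{\mathrm{diam}}(X)$. The two estimates combine exactly as you say.

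One small point worth making explicit for a polished write-up: cuts with $S = \emptyset$ or $S = X$ (which do occur in the raw threshold decomposition for large or small $t$) satisfy $D_S = 0$ and can be discarded before invoking $|S|\,|S^c|\ge m$; you have implicitly done this by restricting to nontrivial cuts, but a sentence saying so would preempt any quibble. Otherwise the proof is complete and entirely self-contained.
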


\begin{theorem}\label{DXInv-iff-strict}
Suppose that $(X,d)$ is a finite 
subset of $(\mR^n,\norm{\cdot}_1)$ with distance matrix $D_X$. Then $D_X$ is invertible if and only if $(X,d)$ is of strict
$1$-negative type. 
\end{theorem}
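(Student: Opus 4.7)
The forward implication (strict $1$-negative type $\Rightarrow$ $D_X$ invertible) is noted immediately before Example~\ref{Inv-not-strict} as a direct consequence of Theorem~\ref{Sanchez}; so the substance of what remains is the converse: assuming $D_X$ is invertible and $X\subseteq(\mR^n,\norm{\cdot}_1)$, one must deduce strict $1$-negative type.

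First I would record the two ``for free'' inputs coming from the ambient $\ell^{1}$ structure. Since $X\subseteq\ell^{1}$, Schoenberg's result (fact (i) in Section~\ref{S:Background}) gives that $(X,d)$ is of $1$-negative type. Since $X$ is a finite subset of $(\mR^n,\norm{\cdot}_1)$, Proposition~\ref{NW4.7} gives $M(X)<\infty$. These two facts will be played off against each other in the remainder of the argument.

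Next I would argue by contradiction: suppose that $(X,d)$ is of $1$-negative type but not strict, so that there exists $\bxi\in F_0$ with $\bxi\neq\zero$ and $\ip<D_X\bxi,\bxi>=0$. Because the symmetric bilinear form $(\bxi,\eta)\mapsto-\ip<D_X\bxi,\eta>$ is positive semidefinite on $F_0\times F_0$, Cauchy--Schwarz for semidefinite forms gives $\ip<D_X\bxi,\eta>=0$ for every $\eta\in F_0$. Consequently $D_X\bxi\in F_0^{\perp}=\mathrm{span}(\one)$, so $D_X\bxi=c\one$ for some scalar $c$. Invertibility of $D_X$ together with $\bxi\neq\zero$ forces $c\neq0$, and then $\bxi=cD_X^{-1}\one$ combined with $\bxi\in F_0$ yields $\ip<D_X^{-1}\one,\one>=0$.

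Finally I would close the loop using Proposition~\ref{DWo3.2}. Since $(X,d)$ is of $1$-negative type there exists $\vecb$ with $D_X\vecb=\one$, and since $M(X)<\infty$, part (ii) of that proposition upgrades this to $\ip<\vecb,\one>=M(X)^{-1}>0$. Invertibility of $D_X$ forces $\vecb=D_X^{-1}\one$, whence $\ip<D_X^{-1}\one,\one>>0$, contradicting the previous step. The only genuinely technical ingredient is the semidefinite-kernel step (that $\ip<D_X\bxi,\bxi>=0$ on $F_0$ forces $D_X\bxi\perp F_0$), which is routine once the right bilinear form is identified; every other step is direct assembly of the previously stated results of Schoenberg, Nickolas--Wolf, and Doust--Wolf.
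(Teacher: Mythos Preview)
Your proof is correct and follows the same overall architecture as the paper's: argue by contradiction, use Schoenberg to get $1$-negative type, derive $\ip<D_X^{-1}\one,\one>=0$, and then contradict this via Propositions~\ref{DWo3.2} and~\ref{NW4.7} (finiteness of $M(X)$).

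The one substantive difference is in how you obtain $\ip<D_X^{-1}\one,\one>=0$. The paper simply notes that failure of strict $1$-negative type together with $1$-negative type forces $\wp(X,d)=1$, and then invokes S{\'a}nchez's Theorem~\ref{Sanchez}: since $\det(D_X)\neq 0$ at $p=1$, the other alternative in that characterization must hold. You instead bypass S{\'a}nchez entirely with the semidefinite-kernel argument (Cauchy--Schwarz on the form $-\ip<D_X\cdot,\cdot>$ restricted to $F_0$) to push a witness $\bxi$ for non-strictness into $D_X\bxi\in\mathrm{span}(\one)$. Your route is slightly more self-contained and in effect reproves the fragment of S{\'a}nchez's result that is actually needed here; the paper's route is shorter on the page but imports a heavier black box. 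Either way the closing use of $M(X)<\infty$ is identical.
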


\begin{proof}  To prove the nontrivial implication, suppose that $D_X$ is invertible, but that $(X,d)$ is not of strict $1$-negative type. 
Since $(X,d) \subseteq (\mR^n,\norm{\cdot}_1)$, it is certainly of $1$-negative type, we must have $\wp(X,d) = 1$. Theorem~\ref{Sanchez} then implies that $\ip<D_X^{-1} \one,\one> = 0$. 
Taking $\vecb = D_X^{-1}\one$ in Proposition~\ref{DWo3.2}, this implies that $M(X) = \infty$. But by Proposition~\ref{NW4.7}, $M(X)$ is finite, which gives the desired contradiction.
\end{proof}

We can combine this with a result of Kelleher et al.~\cite{KMOW} to show that some of the implications from Theorem~\ref{omnibus} hold in the generality of finite subsets of $(\mR^n, \norm{\cdot}_1)$. In Section \ref{S:Examples} we will
show that the converse of the following theorem does not
hold.

\begin{theorem}\label{Aff-impl-strict+DXInv}
Suppose that $(X,d)$ is a finite 
subset of $(\mR^n,\norm{\cdot}_1)$ with distance matrix $D_X$. If $X$ is affinely independent then $D_X$ is invertible and $(X,d)$ is of strict $1$-negative type.
\end{theorem}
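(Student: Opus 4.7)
By Theorem~\ref{DXInv-iff-strict}, the two conclusions of the theorem are equivalent for subsets of $(\mR^n,\norm{\cdot}_1)$, so the task reduces to showing that affine independence of $X$ forces strict $1$-negative type. My plan is to use a Schoenberg-style integral representation of the $\ell^1$ distance to derive an explicit identity for $\ip<D_X\bxi,\bxi>$, from which strict negativity can be read off using the affine independence hypothesis.

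The starting point is the elementary identity
\[
|a-b| = \int_\mR \bigl(\chi_{(-\infty,a]}(t) - \chi_{(-\infty,b]}(t)\bigr)^2\, dt,
\]
which, summed coordinatewise, expresses $\|x_i - x_j\|_1$ as an integral over $\mR$ for each of the $n$ coordinate directions. Substituting this into $\ip<D_X\bxi,\bxi> = \sum_{i,j} \|x_i-x_j\|_1\xi_i\xi_j$ and using $\ip<\bxi,\one>=0$ to collapse the resulting square yields
\[
\ip<D_X\bxi,\bxi> = -2\sum_{k=1}^n \int_\mR \Bigl(\sum_i \xi_i\,\chi_{(-\infty,x_i^{(k)}]}(t)\Bigr)^{\!2} dt.
\]
This is manifestly $\le 0$, which recovers Schoenberg's $1$-negative type of $\ell^1$-spaces as a by-product.

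To obtain strict inequality when $\bxi \in F_0\setminus\{\zero\}$, I would argue by contradiction: assume the right-hand side vanishes. Then for each coordinate $k$ the step function $t\mapsto \sum_i \xi_i\,\chi_{(-\infty,x_i^{(k)}]}(t)$ is identically zero. Its jump at each value $v$ that is realised as a $k$th coordinate of some $x_i$ equals $\sum_{i:\,x_i^{(k)}=v}\xi_i$, so each such sum must vanish. Multiplying these relations by $v$ and summing over $v$ gives $\sum_i \xi_i\,x_i^{(k)} = 0$ for every $k$, i.e.\ $\sum_i \xi_i\, x_i = \zero$ in $\mR^n$. Together with $\sum_i \xi_i = 0$, the affine independence of $\{x_0,\dots,x_m\}$ forces $\bxi=\zero$, a contradiction.

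Hence $(X,d)$ is of strict $1$-negative type, and Theorem~\ref{DXInv-iff-strict} then delivers invertibility of $D_X$. I do not anticipate a serious obstacle; the only subtlety is arranging the jump analysis of the step function in the preceding paragraph so that it produces exactly the finite collection of linear relations needed to invoke the affine independence of $X$.
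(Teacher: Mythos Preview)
Your argument is correct. The integral identity yields the stated formula for $\ip<D_X\bxi,\bxi>$ (using $\ip<\bxi,\one>=0$ to kill the diagonal terms), and the step functions $t\mapsto\sum_i\xi_i\chi_{(-\infty,x_i^{(k)}]}(t)$ are compactly supported since they vanish both for large $t$ and, by $\sum_i\xi_i=0$, for very negative $t$; vanishing in $L^2$ therefore forces all jump sizes to be zero, which is exactly the system of relations you need. (A harmless sign: the jump of $\chi_{(-\infty,a]}$ at $a$ is $-1$, so your jump is $-\sum_{i:\,x_i^{(k)}=v}\xi_i$, but this does not affect the conclusion.)

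The paper's own proof is quite different in character: it simply invokes \cite[Corollary~4.10]{KMOW} as a black box for the implication ``affinely independent $\Rightarrow$ strict $1$-negative type'' and then appeals to Theorem~\ref{DXInv-iff-strict}, exactly as you do for the second half. Your route is longer but fully self-contained and elementary; it is essentially an explicit $S$-embedding of $(\mR^n,\norm{\cdot}_1)$ into $L^2(\mR)^n$ via $a\mapsto\chi_{(-\infty,a]}$, followed by the computation that later appears in the paper as Lemma~\ref{s-embed}. The payoff is that your argument extracts more: it shows directly that $\ip<D_X\bxi,\bxi>=0$ with $\bxi\in F_0$ forces $\sum_i\xi_i\vecx_i=\zero$ for \emph{any} finite $X\subseteq(\mR^n,\norm{\cdot}_1)$, which is a one-sided analogue of Theorem~\ref{1-poly} beyond the Hamming-cube setting. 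The paper's citation-based approach is terser and keeps the focus on the new material.
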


\begin{proof}
    The fact that affine independence implies that $(X,d)$ is of strict $1$-negative type is a special case of \cite[Corollary 4.10]{KMOW}. The result now follows from Theorem~\ref{DXInv-iff-strict}.
\end{proof}

\subsection{Polygonal equalities}

In studying a question of Bernius and Blanchard, Elsner et al.\
\cite{{EHKMZ}} characterized when one obtains the `polygonal equality'
  \begin{eqnarray}\label{Elsner}
  \sum_{i< j} \norm{\vecx_i - \vecx_j}_1 + \sum_{i< j} \norm{\vecy_i - \vecy_j}_1 & = & \sum_{i,j} \norm{\vecx_i - \vecy_j}_1
  \end{eqnarray}
for finite subsets $\{\vecx_i\}_{i = 1}^n$ and
$\{\vecy_j\}_{j = 1}^n$
of $L_1(\Omega,\mu)$. 

Motivated by this and some of the equivalent formulations of generalized roundness,
Kelleher et al.\ \cite[Definition 3.20]{KMOW} introduced the
concept of a nontrivial $p$-polygonal equality in a
general metric space.

\begin{definition}
Suppose that $(X,d)$ is a metric space and that $p \ge 0$. A \textbf{$p$-polygonal equality} in $(X,d)$ is an equality of the form
  \begin{equation}\label{p-poly} 
  \sum_{i=1}^s \sum_{j=1}^t m_i n_j d(x_i,y_j)^p
    = \sum_{1 \le i_1<i_2 \le s} m_{i_1} m_{i_2} d(x_{i_1},x_{i_2})^p
     + \sum_{1 \le j_1<j_2 \le t} n_{j_1} n_{j_2} d(y_{j_1},y_{j_2})^p
  \end{equation}
for some (not necessarily distinct) $x_1,\dots,x_s,y_1,\dots,y_t \in X$ and real weights $m_1,\dots,m_s$, $n_1,\dots,n_t$ with $\sum_{i=1}^s m_i = \sum_{i=j}^t n_j$.
\end{definition}

In order for this concept to be useful, one needs to eliminate `trivial' examples of such equalities. For the purpose of considering condition~(7) in Theorem~\ref{omnibus} it is not necessary that we give the original definition of a
\textbf{nontrivial $p$-polygonal equality}.
(The interested reader is referred to \cite[Definition 3.20]{KMOW}
or \cite[Theorem~2.3]{DWe2}) for details.) Rather we shall work with the
following characterization which applies in the category
of finite metric spaces.

\begin{theorem}\label{p-poly-equiv-form} 
Let $(X,d)$ be a finite metric space and suppose that $p \ge 0$. Then the following are equivalent.
\begin{enumerate}[(i)]
  \item $(X,d)$ admits a nontrivial $p$-polygonal equality.
  \item $\ip<D_p \bxi,\bxi> = 0$ for some nonzero $\bxi \in F_0$.
\end{enumerate}
\end{theorem}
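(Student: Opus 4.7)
My plan is to establish a direct correspondence between $p$-polygonal equalities and vectors $\bxi \in F_0$ satisfying $\ip<D_p \bxi,\bxi> = 0$, based on a single algebraic identity. Write the points of $X$ as $z_0,\dots,z_m$. Given a candidate equality with data $(m_i, x_i)_{i=1}^s$ and $(n_j,y_j)_{j=1}^t$, I would define the \emph{associated weight vector} $\bxi \in \mR^{m+1}$ by setting, for each $k$,
\[
\xi_k \;=\; \sum_{i:\, x_i = z_k} m_i \;-\; \sum_{j:\, y_j = z_k} n_j,
\]
so that points appearing in both lists, or appearing with multiplicity, have their contributions combined. The balance condition $\sum m_i = \sum n_j$ is then exactly $\ip<\bxi,\one>=0$, i.e. $\bxi \in F_0$. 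Conversely, starting from any $\bxi \in F_0$, I would split coordinates into positive and negative parts and read off the lists: $m_k = \xi_k$ at points where $\xi_k>0$, and $n_k = -\xi_k$ at points where $\xi_k<0$; this produces disjoint lists with $\sum m_k=\sum n_k$.

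Next I would verify the key identity
\[
\ip<D_p \bxi,\bxi> \;=\; 2\!\left(\sum_{1 \le i_1<i_2 \le s} m_{i_1}m_{i_2} d(x_{i_1},x_{i_2})^p + \sum_{1 \le j_1<j_2 \le t} n_{j_1}n_{j_2} d(y_{j_1},y_{j_2})^p - \sum_{i,j} m_i n_j d(x_i,y_j)^p\right).
\]
This is purely formal: expanding $\ip<D_p\bxi,\bxi> = \sum_{k,l} \xi_k \xi_l d(z_k,z_l)^p$ and grouping the products $\xi_k \xi_l$ according to whether each factor came from a positive $m$-contribution or a negative $n$-contribution yields the three blocks on the right (the diagonal terms $d(z_k,z_k)^p$ vanish, and contributions arising when $x_i = y_j$ drop out because $d(x_i,y_j)^p = 0$ in that case, consistent with both sides). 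Thus the polygonal-equality defect is precisely $\tfrac{1}{2}\ip<D_p\bxi,\bxi>$.

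It then remains to match \emph{nontrivial} to \emph{nonzero}. Invoking the characterization of nontrivial $p$-polygonal equalities in finite metric spaces established in \cite[Theorem~2.3]{DWe2} (equivalently, the original definition in \cite[Definition 3.20]{KMOW}), nontriviality of a given equality is precisely the condition that its associated weight vector $\bxi$ does not vanish after the cancellations above. With this in hand, (i)$\Rightarrow$(ii) follows by taking the associated $\bxi$ (nonzero by nontriviality, in $F_0$ by the balance condition, and annihilating $D_p$ by the identity), and (ii)$\Rightarrow$(i) follows from the positive/negative part construction, which yields disjoint lists whose associated weight vector is the original $\bxi \ne 0$ and hence gives a nontrivial equality.

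The part requiring the most care is reconciling the bookkeeping when points repeat within one list or coincide across the two lists: one must check that collapsing repetitions and cross-cancellations does not alter the value of either side of the polygonal identity, which is essentially immediate from $d(x,x)^p=0$ and the bilinearity of the expressions in the $m_i$ and $n_j$. This, together with citing the equivalence between the informal "nontriviality" and $\bxi \ne 0$, is the only nonroutine ingredient; everything else is the algebraic identity above.
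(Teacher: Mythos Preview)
The paper does not actually give its own proof of this theorem. It introduces the result as a ``characterization which applies in the category of finite metric spaces,'' deferring both the original definition of a nontrivial $p$-polygonal equality and the proof of the equivalence to \cite[Definition~3.20]{KMOW} and \cite[Theorem~2.3]{DWe2}. The remark immediately following the theorem merely records that any nontrivial $p$-polygonal equality ``can be algorithmically reduced to an identity of the form $\ip<D_{X,p}\bxi,\bxi>=0$,'' without carrying out that reduction.

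Your argument is correct and is exactly the content the paper leaves to those references: the associated-weight-vector construction, the algebraic identity
\[
\ip<D_p\bxi,\bxi> \;=\; 2\Bigl(\sum_{i_1<i_2} m_{i_1}m_{i_2}d(x_{i_1},x_{i_2})^p + \sum_{j_1<j_2} n_{j_1}n_{j_2}d(y_{j_1},y_{j_2})^p - \sum_{i,j} m_i n_j d(x_i,y_j)^p\Bigr),
\]
and the appeal to \cite[Theorem~2.3]{DWe2} for the equivalence ``nontrivial $\Leftrightarrow$ $\bxi\neq\zero$.'' So there is nothing to compare beyond noting that you have made explicit precisely what the paper imports by citation; your treatment is consistent with, and fills in, the paper's intended argument.
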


\begin{remark}\label{ntpp}
Every nontrivial $p$-polygonal equality in the sense of
\cite[Definition 3.20]{KMOW} can be algorithmically
reduced to an identity of the form $\ip<D_{X,p} \bxi, \bxi> = 0$
and this may therefore be considered the standard form for
the polygonal equality. It is sufficient therefore in order
to understand all of the nontrivial $p$-polygonal equalities
in a space to provide a description of all such identities
$\ip<D_{X,p} \bxi, \bxi> = 0$ which are admitted in the space.
\end{remark}

An implicit consideration of nontrivial $p$-polygonal
equalities (without naming them) was already prominent
in Li and Weston \cite{LiW}. Subsequently, Weston \cite{W2017}
classified the nontrivial $p$-polygonal equalities in certain
subsets of $L_p$-spaces and used this to show that any
two-valued Schauder basis of $L_p$ must have strict
$p$-negative type. 

Building off of work in \cite{KMOW}, the authors have
recently shown that conditions (3) and (7) of Theorem
\ref{omnibus} are equivalent in all metric spaces.

\begin{theorem}\label{3=7} \cite[Theorem~1.3]{DWe2}
    Suppose that $(X,d)$ is a metric space. Then $(X,d)$ is of strict $p$-negative type if and only if $(X,d)$ does not admit any nontrivial $p$-polygonal equalities.
\end{theorem}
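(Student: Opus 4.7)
The strategy is to reduce the statement, which is phrased for general metric spaces, to the finite setting where Theorem~\ref{p-poly-equiv-form} applies directly. Both properties in play are \emph{local}: a nontrivial $p$-polygonal equality in $(X,d)$ involves only finitely many points and weights, and strict $p$-negative type is defined by an inequality that must hold on every finite configuration of points in $X$. Consequently, it suffices to establish the equivalence for finite metric spaces $(Y,d)$. The forward direction is then immediate: if $(Y,d)$ is of strict $p$-negative type, then $\ip<D_p \bxi,\bxi> < 0$ for every nonzero $\bxi \in F_0$, so in particular there is no nonzero $\bxi \in F_0$ realising $\ip<D_p \bxi,\bxi> = 0$, and Theorem~\ref{p-poly-equiv-form} precludes any nontrivial $p$-polygonal equality.

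For the reverse direction, suppose $(Y,d)$ is not of strict $p$-negative type, so there exists a nonzero $\bxi_0 \in F_0$ with $Q(\bxi_0) := \ip<D_p \bxi_0,\bxi_0> \ge 0$. If $Q(\bxi_0) = 0$ we are done by Theorem~\ref{p-poly-equiv-form}. Otherwise $Q(\bxi_0) > 0$, in which case $(Y,d)$ fails to be of $p$-negative type at all. I would then choose two distinct points $y_0, y_1 \in Y$ and let $\bmu \in F_0$ be the vector with entries $1$ at $y_0$, $-1$ at $y_1$, and $0$ elsewhere, so that $Q(\bmu) = -2\, d(y_0,y_1)^p < 0$. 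Because $Q$ takes values of opposite sign on $\bxi_0$ and $\bmu$, these vectors must be linearly independent, so the continuous path $\bxi_t = (1-t)\bxi_0 + t\bmu$ stays in $F_0 \setminus \{\zero\}$ while $Q(\bxi_t)$ changes sign on $[0,1]$; the intermediate value theorem produces a nonzero $\bxi_{t^{\ast}} \in F_0$ with $Q(\bxi_{t^{\ast}}) = 0$, again yielding the required nontrivial $p$-polygonal equality via Theorem~\ref{p-poly-equiv-form}.

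The main obstacle I would anticipate is precisely this case $Q(\bxi_0) > 0$ in the reverse direction, since the hypothesis that $(Y,d)$ is not of strict $p$-negative type does \emph{not} presuppose that $(Y,d)$ is of $p$-negative type in the first place. The intermediate value argument above resolves this cleanly by pitting an arbitrary witness of nonnegativity against a trivial two-point witness of strict negativity; this requires $|Y| \ge 2$, which is automatic whenever $F_0$ is nontrivial. A minor bookkeeping point is to verify the passage between $X$ and its finite subsets on both sides of the equivalence: a nontrivial polygonal equality in $X$ restricts to one on the finite set of points it invokes, and strict $p$-negative type of $X$ is inherited by every finite subset, which closes the reduction loop.
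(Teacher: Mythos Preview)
The paper does not contain a proof of this theorem; it is quoted verbatim from the authors' companion paper \cite{DWe2}, so there is nothing here to compare your argument against.

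That said, your proposal is a correct and self-contained proof. The reduction to finite subsets is sound: both strict $p$-negative type and the data of a (nontrivial) $p$-polygonal equality are finitary, so passing to the finite configuration of points involved loses nothing. Once in the finite setting, Theorem~\ref{p-poly-equiv-form} handles the forward direction at once. For the reverse direction your intermediate value argument is exactly the right device to cover the case where $(Y,d)$ is not even of $p$-negative type: since $Q(\bxi_0) > 0$ and $Q(\bmu) < 0$, the vectors $\bxi_0$ and $\bmu$ cannot be proportional (else $Q(\bmu) = c^2 Q(\bxi_0) > 0$), so the segment $\bxi_t = (1-t)\bxi_0 + t\bmu$ lies in $F_0 \setminus \{\zero\}$, and the continuous (indeed quadratic) function $t \mapsto Q(\bxi_t)$ must vanish at some $t^\ast \in (0,1)$. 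The only prerequisite not proved in the present paper is Theorem~\ref{p-poly-equiv-form} itself, which is also imported from \cite{DWe2}.
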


From (\ref{LW-str}) we immediately get that if $(X,d)$ is a finite metric space with $\wp(X)$ finite, then 
  \[  \{p > 0 \st \text{$(X,d)$ admits a nontrivial $p$-polygonal equality}\} = [\wp(X),\infty).\]

It is worth noting that if $(X,d)$ is a finite subset of $(\mR^n,\norm{\cdot}_1)$ with $\wp(X) = 1$ then it is easy to find nontrivial $1$-polygonal equalities. In this case $(X,d)$ is not of strict $1$-negative type, so by Theorem~\ref{DXInv-iff-strict}, $D_X$ is not invertible. S{\'a}nchez \cite[Corollary 2.5]{San} showed that then $\ker(D_X) \subseteq F_0$ and so any nonzero $\bxi \in \ker(D_X)$ gives a nontrivial $1$-polygonal equality.

\subsection{Known results for weighted trees}

The literature contains several formulas for the quantities $\det(D_X)$ 
and $\cof(D_X)$ for weighted trees, extending the results noted in the introduction.

Bapat, Kirkland and Neumann \cite{BKN} extended Graham and Pollak's result on the determinant of the distance matrix, giving a formula which is again independent of the structure of the tree. Related work can be found in \cite{ZD}.

\begin{theorem}\label{det-w-tree} \cite[Corollary 2.5]{BKN}
Suppose that $T$ is an $(n+1)$-point metric tree with edge weights $w_1,\dots,w_n$. Then 
  \begin{equation} 
  \det(D_T) =  (-1)^{n}\, 2^{n-1} \Bigl(\prod_{i=1}^n w_i\Bigr)
                     \Bigl( \sum_{i=1}^n w_i\Bigr).
  \end{equation}
\end{theorem}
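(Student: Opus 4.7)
The plan is to induct on the number of edges $n$, carrying along as an auxiliary inductive statement the companion identity
\[
\ip<D_T^{-1}\one, \one> \;=\; \frac{2}{s_n}, \qquad s_n := \sum_{i=1}^n w_i.
\]
Invertibility of $D_T$ at each stage follows from the fact, noted in Section~\ref{S:Background}, that every finite metric tree is of strict $1$-negative type, together with Proposition~\ref{DInv-iff-M_finite}. The base case $n=1$ is a direct $2\times 2$ calculation.

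For the inductive step, pick a leaf $v_0$ joined to its neighbour $v_1$ by an edge of weight $w$, and let $T'$ be the tree obtained by deleting $v_0$; relabel so that $T'$ carries weights $w_1,\dots,w_{n-1}$. Order the vertices so that $v_0$ is first and $v_1$ corresponds to the first coordinate of $D_{T'}$. Writing $\vece_1$ for the first standard basis vector, the identity $d(v_0,v_j) = w + d(v_1,v_j)$ for $j \ge 1$ yields the block form
\[
D_T \;=\; \begin{pmatrix} 0 & \vecv^T \\ \vecv & D_{T'} \end{pmatrix}, \qquad \vecv \;=\; w\one + D_{T'}\vece_1.
\]
The Schur complement formula then gives $\det(D_T) = -\det(D_{T'})\,\vecv^T D_{T'}^{-1}\vecv$. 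Expanding the quadratic form and using $\vece_1^T D_{T'}\vece_1 = 0$ yields $\vecv^T D_{T'}^{-1}\vecv = w^2 \ip<D_{T'}^{-1}\one,\one> + 2w$, which by the auxiliary hypothesis collapses to $2w s_n/s_{n-1}$. Substituting the inductive determinantal formula for $D_{T'}$ then produces the claimed expression for $\det(D_T)$.

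To close the induction, the companion identity for $T$ is obtained by solving $D_T\vecx = \one$ directly in the block form: writing $\vecx = (x_0,\vecy^T)^T$, the system decouples as $\vecv^T\vecy = 1$ together with $\vecy = D_{T'}^{-1}(\one - x_0\vecv)$. Substituting the second into the first and applying $\ip<D_{T'}^{-1}\one,\one> = 2/s_{n-1}$ yields $x_0 = 1/s_n$, and a parallel short computation gives $\one^T\vecy = 1/s_n$, whence $\ip<\vecx,\one> = 2/s_n$. The main obstacle is structural rather than computational: the Schur complement recurrence for $\det(D_T)$ does not close on itself, because it involves $\ip<D_{T'}^{-1}\one,\one>$, which is not determined by $\det(D_{T'})$ alone. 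Recognising the need to track these two quantities in tandem is the essential idea; once the joint induction is established, the remaining calculations are routine.
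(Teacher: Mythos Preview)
Your proof is correct. The joint induction on $\det(D_T)$ and $\ip<D_{T}^{-1}\one,\one>$ via leaf removal and Schur complement goes through exactly as you indicate; in particular the computations $\vecv^T D_{T'}^{-1}\vecv = 2w s_n/s_{n-1}$, $x_0 = 1/s_n$, and $\one^T\vecy = 1/s_n$ are all easily verified.

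Note, however, that the paper does not give its own proof of this statement: Theorem~\ref{det-w-tree} is simply quoted from \cite{BKN}. What the paper \emph{does} prove is the generalisation in Theorem~\ref{detDX-full-dim}, valid for any affinely independent $(n+1)$-point subset of $H_\WS$, and the route taken there is quite different from yours. Rather than inducting on the tree structure, the paper embeds $X$ in $H_\WS$, performs row and column operations on $D_X$ to reduce to a bordered weighted Gram matrix $G_W$ (Lemma~\ref{detDX}), and then evaluates $\det(G_W)$ and $\ip<G_W^{-1}\vecu_X,\vecu_X>$ directly using $G_W = \hatB W\hatB^T$ and $\vecu_X = B\one$. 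The companion quantity $\ip<D_X^{-1}\one,\one>$ is likewise obtained not by block inversion but geometrically, via the $M$-constant and the $S$-embedding into a Euclidean sphere (Section~\ref{S:M-const}). Your argument is more self-contained and needs no external input beyond strict $1$-negative type for invertibility, but it is specific to trees: the leaf-deletion recursion has no analogue for a general affinely independent subset of $H_\WS$, whereas the paper's Gram-matrix approach handles all such sets uniformly and isolates the tree-independent factor $\det(\hatB)^2$.
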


Choudury and Khare \cite{CK} (and earlier authors) have looked at a rather more complicated setup which involves $q$-versions with bi-directed trees in which the matrices have entries $$\frac{q^{d(\vecx_i,\vecx_j)}-1}{q-1}.$$
They obtain somewhat more complex expressions for
$\det(D_T)$ and $\cof(D_T)$ in this setting.

%
%

\section{Weighted and unweighted Hamming cubes}\label{S:HammingCubes}

Let $H_n$ denote the usual (unweighted) Hamming cube, $H_n = \{0,1\}^n \subseteq \mR^n$, equipped with the $\ell^1$ metric. Given a weight list $\WS = [w_1,\dots,w_n]$, the diagonal matrix $W = \mathrm{diag}(w_1,\dots,w_n)$ acts as an invertible map from $H_n$ onto $H_\WS$. Given $X \subseteq H_\WS$, let $\hatX = W^{-1} X$ be the inverse image of $X$ in $H_n$. Clearly $\hatX$ is affinely independent if and only if $X$ is. 

Any $\vecx \in H_\WS$ can be written as 
  \[ \vecx = \sum_{k=1}^n w_k \, b_k \, \vece_k \]
where $\vece_k$ is the $k$th standard basis vector and $b_k \in \{0,1\}$. We shall denote the inverse image of $\vecx$ under $W$ by $\hatx$. That is, $\hatx = \sum_{k=1}^n b_k \, \vece_k \in H_n$. 

It is worth observing that for all $k$, the reflection map under which $w_k\, b_k\, \vece_k$, the $k$th coordinate of $\vecx$, is replaced by $w_k\, (1-b_k) \,\vece_k$ is an isometry of $H_\WS$. In particular applying one or more of these reflections does not affect the distance matrix of a subset $X \subseteq H_\WS$, nor whether the set is affinely independent. We may therefore, when convenient, assume that any subset $X = \{\vecx_0,\dots,\vecx_m\} \subseteq H_\WS$ has $\vecx_0 = \zero$.

In what follows we shall make use of a weighted inner product. If $\vecx = \sum_k w_k \, b_k \vece_k$ and $\vecy = \sum_k w_k \, b'_k \vece_k$, then we define
  \[ \ip<\vecx,\vecy>_W = \sum_{k=1}^n w_k\, b_k\, b_k'.\]
For all $\vecx \in H_\WS$ we have $\norm{\vecx}_1 = \ip<\vecx,\vecx>_W$.

Our first step is to show that $D_X$ is invertible if and only if $D_{\hatX}$ is invertible. 

Suppose then that $X = \{\vecx_0 = \zero,\vecx_1,\dots,\vecx_m\} \subseteq H_\WS$. Let $\vecu_X = (\norm{\vecx_1}_1,\dots,\norm{\vecx_m}_1)^T$ be the vector of norms of the nonzero elements of $X$. The above inner product determines a Gram matrix for $X$:
  \[ G_W  = \bigl( \ip<\vecx_i,\vecx_j>_W \bigr)_{i,j=1}^m. \]

Lemma~2.1 from \cite{DRSW} extends to the weighted setting.

\begin{lemma}\label{detDX}
Let $X = \{\vecx_0=\zero,\vecx_1,\dots,\vecx_m\}$ be a subset of $H_\WS$. Then
  \[ \det(D_X) = (-1)^{m-1} 2^{m-1} 
    \det \begin{pmatrix}
        0      & \vecu_X^T \\
        \vecu_X  & G_W
    \end{pmatrix}. \]
\end{lemma}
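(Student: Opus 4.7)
The plan hinges on a single algebraic identity: for any two points $\vecx_i,\vecx_j \in H_\WS$,
\[
d(\vecx_i,\vecx_j) = \norm{\vecx_i}_1 + \norm{\vecx_j}_1 - 2\ip<\vecx_i,\vecx_j>_W.
\]
This is verified one coordinate at a time; writing the $k$th entries as $w_k b_k$ and $w_k b_k'$ with $b_k, b_k' \in \{0,1\}$, the claim reduces to $w_k|b_k - b_k'| = w_k(b_k + b_k' - 2 b_k b_k')$, which is immediate for binary entries. First I would establish this identity, since it is the algebraic workhorse driving everything that follows.

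Next, using the assumption $\vecx_0 = \zero$ (which forces $\norm{\vecx_0}_1 = 0$ and $\ip<\vecx_0,\vecx_j>_W = 0$), the identity lets us decompose $D_X$ in $1+m$ block form as
\[
D_X = \begin{pmatrix} 0 & \vecu_X^T \\ \vecu_X & \vecu_X \one^T + \one \vecu_X^T - 2 G_W \end{pmatrix},
\]
where $\one$ is the all-ones column vector of length $m$. I would then apply two determinant-preserving block operations: subtract the first row from each of the remaining $m$ rows, and then subtract the first column from each of the remaining $m$ columns. The row step annihilates the $\one \vecu_X^T$ summand in the lower-right block while leaving the first row intact (because its first entry is $0$), and the column step then annihilates $\vecu_X \one^T$ while leaving the first column intact for the same reason. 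The upshot is
\[
\det(D_X) = \det\begin{pmatrix} 0 & \vecu_X^T \\ \vecu_X & -2 G_W \end{pmatrix}.
\]

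To finish, I would pull the scalar factor out of the lower-right $m \times m$ block. Dividing each of the last $m$ rows by $-2$ multiplies the determinant by $(-1/2)^m$ and rescales the first column to $-\vecu_X/2$; restoring it by multiplying the first column by $-2$ contributes a further factor of $-2$. Combining these two scalar adjustments produces the prefactor $(-1)^{m-1} 2^{m-1}$ relating the determinant in the displayed form to the target determinant in the lemma.

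There is no substantive obstacle in this approach; the computation becomes routine once the metric-to-inner-product identity is in hand. The only two points requiring genuine care are confirming that the first row and first column are not altered by the block row/column operations (which depends crucially on the normalization $\vecx_0 = \zero$), and bookkeeping the signs and powers of two at the final scaling step.
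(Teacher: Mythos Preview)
Your argument is correct and is essentially the paper's own proof: both establish the coordinatewise identity $|b-b'| = b+b'-2bb'$ (equivalently $d(\vecx_i,\vecx_j)=\norm{\vecx_i}_1+\norm{\vecx_j}_1-2\ip<\vecx_i,\vecx_j>_W$), then subtract the first row and first column to reach $\begin{pmatrix}0&\vecu_X^T\\ \vecu_X&-2G_W\end{pmatrix}$, and finally extract the factor $(-2)^{m-1}$. Your version is slightly more explicit in writing the block decomposition up front and in justifying the scalar bookkeeping at the end, but the route is identical.
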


\begin{proof} 
Beginning with the distance matrix $D_X$, perform the elementary rows operations of first subtracting the first row from each of the other rows, and then subtracting the first column of that matrix from each of the other columns. 

If we write $\vecx_i = \sum_{k=1}^n w_k\, b_{i,k}\, \vece_k$, then a small calculation shows that the $(i,j)$th entry of the resulting matrix $D_1$ is
  \[ \begin{cases}
      \norm{\vecx_{i-1}}_1,  & \text{if $j = 0$}, \\
      \norm{\vecx_{j-1}}_1,  & \text{if $i = 0$}, \\
      \sum_{k=1}^n w_k \bigl( |b_{i-1,k}-b_{j-1,k}|- b_{i-1,k}-b_{j-1,k}\bigr),  &
                  \text{if $1 \le i,j \le m$.}
  \end{cases} 
  \]
Since each $b_{i,k} \in \{0,1\}$ one can readily verify that 
  \[  w_k \bigl(|b_{i-1,k}-b_{j-1,k}|- b_{i-1,k}-b_{j-1,k} \bigr)
       = -2 w_k\, b_{i-1,k} \, b_{j-1,k} \]
and so 
  \[ \sum_{k=1}^n w_k \bigl( |b_{i-1,k}-b_{j-1,k}|- b_{i-1,k}-b_{j-1,k}\bigr)
       = -2 \ip<\vecx_{i-1},\vecx_{j-1}>_W.\]
Thus 
  \[ D_1 = \begin{pmatrix}
      0   & \vecu_X^T \\
      \vecu_X  & -2\, G_W
  \end{pmatrix}\]
and so 
  \[ \det(D_X) = \det(D_1) =  (-2)^{m-1}
  \det \begin{pmatrix}
      0   & \vecu_X^T \\
      \vecu_X  & G_W
  \end{pmatrix}. \]
\end{proof}

A standard determinant identity (see, for example, \cite[Lemma~2.3]{DRSW}) shows that if $G_W$ is invertible then
  \begin{equation}\label{det-ident}
   \det \begin{pmatrix}
        0      & \vecu_X^T \\
        \vecu_X  & G_W
    \end{pmatrix}
    = \det(G_W) \det(-\vecu_X^T G_W^{-1} \vecu_X)
    = -\det(G_W) \ip<G_W^{-1} \vecu_X,\vecu_X>.
    \end{equation}

Given an $m \times n$ matrix $A$ and a $m$-element subset $S \subseteq \{1,\dots,n\}$, let $A_S$ denote the $m \times m$ submatrix of $A$ obtained by selecting the columns with indices in $S$. The following useful fact is a consequence of the Cauchy--Binet formula.

\begin{lemma}\label{CB-lem}
Let $A$ be an $m \times n$ matrix with $m \le n$. Then 
  \[ \det(A A^T) = \sum_{|S|=m} \det(A_S)^2 \]
where the sum is taken over all $m$ element subsets of $\{1,\dots,n\}$.
\end{lemma}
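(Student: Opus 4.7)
The plan is essentially a direct invocation of the Cauchy--Binet formula. Recall that Cauchy--Binet states that for an $m \times n$ matrix $P$ and an $n \times m$ matrix $Q$ with $m \le n$,
\[
\det(PQ) = \sum_{|S|=m} \det(P_S) \det(Q^S),
\]
where the sum is over $m$-element subsets $S \subseteq \{1,\dots,n\}$, $P_S$ is the $m \times m$ submatrix of $P$ formed by the columns indexed by $S$, and $Q^S$ is the $m \times m$ submatrix of $Q$ formed by the rows indexed by $S$.

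To prove the lemma I would apply this with $P = A$ and $Q = A^T$. The key bookkeeping observation is that the rows of $A^T$ indexed by $S$ are precisely the transposes of the columns of $A$ indexed by $S$; that is, $(A^T)^S = (A_S)^T$. Since $\det((A_S)^T) = \det(A_S)$, Cauchy--Binet immediately yields
\[
\det(A A^T) = \sum_{|S|=m} \det(A_S)\det\bigl((A_S)^T\bigr) = \sum_{|S|=m} \det(A_S)^2,
\]
as required.

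There is no real obstacle here beyond correctly matching the index conventions for the row- versus column-selection submatrices. If one preferred not to quote Cauchy--Binet as a black box, one could instead derive the identity from first principles by expanding $\det(AA^T)$ via the Leibniz formula and grouping the resulting sum over pairs of functions $\{1,\dots,m\} \to \{1,\dots,n\}$ according to their image; the surviving terms are indexed by $m$-element subsets $S$ and, after accounting for signs, assemble into $\det(A_S)^2$. Either route is routine, so I would opt to cite Cauchy--Binet and give the one-line deduction above.
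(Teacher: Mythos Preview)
Your proposal is correct and matches the paper's approach exactly: the paper does not give a separate proof but simply states the lemma as a consequence of the Cauchy--Binet formula, which is precisely what you do (with the additional one-line bookkeeping that $(A^T)^S = (A_S)^T$).
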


Suppose now that $m \le n$.
Let $B \in M_{mn}$ be the matrix whose $i$th row is $\vecx_i$. Then
  \[ G_W = B W^{-1} B^T = (BW^{-1/2})(BW^{-1/2})^T. \]
It follows then from Lemma~\ref{CB-lem} that
  \begin{equation}\label{CBin} \det(G_W) = 
   \sum_{|S|=m} \det \left((BW^{-1/2})_S\right)^2
  = \sum_{|S|=m} \frac{\det(B_S)^2}{\prod_{j \in S} w_j}.
  \end{equation}
Thus, $\det(G_W) = 0$ if and only if $\det(B_S) = 0$ for all $S$.

Let ${\hat B}$ be the $m \times n$ matrix whose $i$th row is $\hatx_i := W^{-1} \vecx_i \in H_n$.  That is, the $(i,k)$th  entry of $\hat B$ is $b_{i,k}$, and $B = {\hat B}W$. Let
  \[ {\hat G} = {\hat B}{\hat B}^T = \bigl( \ip<\hatx_i,\hatx_j>\bigr)_{i,j=1}^m
     = \Bigl(\sum_{k=1}^n b_{i,k} \, b_{j,k}\Bigr)_{i,j=1}^m. \]
be the corresponding Gram matrix. By Lemma~\ref{CB-lem} 
  \[ \det(\hatG) = \sum_{|S|=m} \det({\hat B}_S)^2 .\]

\begin{lemma}\label{hatG-lem} Suppose that $\{\vecx_0 = \zero,\vecx_1, \ldots, \vecx_m\} \subseteq H_\WS \subseteq \mR^n$ with $m \le n$
With the above notation:
\begin{enumerate}[(i)]
\item If $|S| = m$, then $\det(B_S) = 0$ if and only if $\det({\hat B}_S) = 0$.
\item The following are equivalent.
  \begin{enumerate}[(a)]
  \item $\det(G_W) = 0 $.
  \item $\det(\hatG) = 0$.   
  \item $\det(B_S) = 0$ for all $m$-element sets $S$.
  \item $\{\vecx_1,\dots,\vecx_m\}$ is linearly dependent.
  \item $\{\hatx_1,\dots,\hatx_m\}$ is linearly dependent.
  \end{enumerate}
\end{enumerate}
\end{lemma}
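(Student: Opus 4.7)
The plan is to reduce everything to the factorization $B = \hatB W$, where $W = \mathrm{diag}(w_1,\dots,w_n)$ is invertible, combined with the Cauchy--Binet identity of Lemma~\ref{CB-lem} (which has already produced the useful expressions $\det(G_W) = \sum_{|S|=m} \det(B_S)^2/\prod_{j\in S}w_j$ and $\det(\hatG) = \sum_{|S|=m}\det(\hatB_S)^2$). Since row $i$ of $B$ is $\vecx_i^T = (W\hatx_i)^T = \hatx_i^T W$, the factorization $B = \hatB W$ is immediate.

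For part (i), I would restrict $B = \hatB W$ to the columns indexed by any $m$-element set $S$ to obtain $B_S = \hatB_S W_S$, where $W_S$ is the $m \times m$ diagonal matrix with the positive entries $\{w_j : j \in S\}$. Taking determinants gives $\det(B_S) = \det(\hatB_S) \prod_{j \in S} w_j$, so $\det(B_S) = 0$ if and only if $\det(\hatB_S) = 0$.

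For part (ii), I would assemble the equivalences in the chain (a)$\Leftrightarrow$(c)$\Leftrightarrow$(d)$\Leftrightarrow$(e)$\Leftrightarrow$(b). The equivalence (a)$\Leftrightarrow$(c) is immediate from (\ref{CBin}), since $\det(G_W)$ is a sum of non-negative terms and so vanishes precisely when every $\det(B_S) = 0$. For (c)$\Leftrightarrow$(d), I would observe that $\{\det(B_S) : |S| = m\}$ is exactly the collection of $m \times m$ minors of $B$; their simultaneous vanishing is equivalent to $\rank(B) < m$, i.e.\ to linear dependence of the rows $\vecx_1,\dots,\vecx_m$. The equivalence (d)$\Leftrightarrow$(e) is immediate since $W$ is invertible and $\vecx_i = W\hatx_i$, so linear dependence of one family matches the other. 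Finally, applying the same minors argument to $\hatB$ shows that (e) is equivalent to having $\det(\hatB_S) = 0$ for all $S$, and the identity $\det(\hatG) = \sum_{|S|=m}\det(\hatB_S)^2$ then gives (e)$\Leftrightarrow$(b).

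There is no real obstacle. The substance of the lemma has already been encoded in the Cauchy--Binet expansions preceding the statement together with the factorization $B = \hatB W$; what remains is careful bookkeeping of how the positivity of the weights $w_k$ translates vanishing of Gram determinants into vanishing of the $m \times m$ minors of $B$ (or $\hatB$), and hence into linear dependence of the corresponding row sets.
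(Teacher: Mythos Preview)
Your proposal is correct and follows essentially the same route as the paper's proof: both use the factorization $B_S = \hatB_S W_{(S)}$ with $W_{(S)}$ invertible for part~(i), and for part~(ii) both rely on the Cauchy--Binet expansions (\ref{CBin}) and $\det(\hatG)=\sum_{|S|=m}\det(\hatB_S)^2$ together with the standard rank/Gram-matrix link to linear dependence. You spell out the equivalence chain a bit more explicitly than the paper does, but there is no substantive difference.
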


\begin{proof} 
(i) Let $S = [j_1,\dots,j_m]$ and let $W_{(S)} = \text{diag}(w_{j_1},\dots,w_{j_m})$, which is always invertible. Since $B_S = {\hat B}_S W_{(S)}$ this gives the desired conclusion. 

(ii) That (b) is equivalent to (e) is a standard property of the Gram matrix. We have already noted that (d) and (e) are equivalent. The remaining equivalences follow from (i), equation (\ref{CBin}) and Lemma~\ref{CB-lem}.
\end{proof}

Let $X = \{\vecx_0 = \zero,\vecx_1, \ldots, \vecx_m\} \subseteq H_\WS$. Then $\{\vecx_1,\dots,\vecx_m\}$ is linearly independent if and only if $\{\hatx_1,\dots,\hatx_m\}$ is linearly independent.

\begin{theorem}\label{AI-iff-invert}
Let $X = \{\vecx_0,\vecx_1, \ldots, \vecx_m\} \subseteq H_\WS$.
Then $X$ is affinely independent if and only if $\det(D_X) \ne 0$.
\end{theorem}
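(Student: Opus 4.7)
The plan is to combine Lemma~\ref{detDX} and the determinant identity (\ref{det-ident}) with a direct analysis of the kernel of the bordered matrix appearing there. Before anything else, I would use the reflection isometries of $H_\WS$ described just after the definition of $H_\WS$ to reduce to the case $\vecx_0 = \zero$, which preserves both affine independence and the distance matrix. Under this normalization, $X$ is affinely independent precisely when $\vecx_1, \ldots, \vecx_m$ is linearly independent.

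For the forward direction, suppose $\vecx_1, \ldots, \vecx_m$ is linearly independent, so that $m \le n$. By Lemma~\ref{hatG-lem}, $\det(G_W) \ne 0$, and indeed $G_W$ is then positive definite. Since the nontriviality assumption gives $\vecx_1 \ne \zero$ and hence $\vecu_X \ne \zero$, the identity (\ref{det-ident}) shows that the bordered determinant equals $-\det(G_W)\,\ip<G_W^{-1}\vecu_X,\vecu_X>$, which is strictly negative, and Lemma~\ref{detDX} then gives $\det(D_X) \ne 0$. (This implication is also a direct consequence of Theorem~\ref{Aff-impl-strict+DXInv}.)

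For the converse, assume $X$ is affinely dependent, so that there exists a nonzero $\vecv = (v_1, \ldots, v_m)^T$ with $\sum_{i=1}^m v_i \vecx_i = \zero$. Writing $\vecx_i = \sum_k w_k b_{i,k} \vece_k$ with $b_{i,k} \in \{0,1\}$, this relation forces $\sum_i v_i b_{i,k} = 0$ for every $k$. From $B^T \vecv = \zero$ we obtain $G_W \vecv = B W^{-1} B^T \vecv = \zero$, and in addition
\[
\vecu_X^T \vecv = \sum_i v_i \norm{\vecx_i}_1 = \sum_k w_k \sum_i v_i b_{i,k} = 0.
\]
Hence the nonzero column vector with first entry $0$ and remaining entries $v_1, \ldots, v_m$ lies in the kernel of the bordered matrix in Lemma~\ref{detDX}, forcing $\det(D_X) = 0$. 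The main subtlety, and the place where the Hamming structure is used essentially, is this last algebraic identity: a generic vector in the kernel of a Gram matrix need not annihilate the associated vector of squared norms, but here $b_{i,k}^2 = b_{i,k}$, so $\norm{\vecx_i}_1 = \ip<\vecx_i,\vecx_i>_W$ is linear in the binary coordinates, and this linearity makes every kernel vector of $G_W$ automatically annihilate $\vecu_X$.
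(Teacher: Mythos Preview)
Your proof is correct, and both directions take a genuinely different route from the paper's. For the forward implication the paper simply invokes Theorem~\ref{Aff-impl-strict+DXInv}, whereas you give a self-contained argument via the positive definiteness of $G_W$ and identity~(\ref{det-ident}); your version has the virtue of not relying on the external result \cite[Corollary~4.10]{KMOW}. For the converse, the paper sets $c_0 = -\sum_{j=1}^m c_j$, forms $\vecc = (c_0,c_1,\dots,c_m)^T \in F_0$, and verifies by a direct entrywise computation (using $|b-b'| = b - 2bb' + b'$) that $D_X\vecc = \zero$. You instead exhibit $(0,\vecv^T)^T$ in the kernel of the bordered matrix $\left(\begin{smallmatrix}0 & \vecu_X^T\\ \vecu_X & G_W\end{smallmatrix}\right)$ and invoke Lemma~\ref{detDX}. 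Both arguments exploit the same Hamming-specific identity $\norm{\vecx_i}_1 = \ip<\vecx_i,\vecx_i>_W$ linear in the binary coordinates; you isolate this as the key step $\vecu_X^T\vecv = 0$, while the paper buries it inside the longer calculation. The paper's approach has one advantage worth noting: it produces an explicit nonzero element of $\ker(D_X)\cap F_0$, and exactly this computation is later reused in the proof of Theorem~\ref{1-poly} and underpins Corollary~\ref{s-san-i}. Your argument, tracing back through the row and column operations in the proof of Lemma~\ref{detDX}, would recover the same vector, but less directly.
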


\begin{proof}
The forward implication follows from Theorem~\ref{Aff-impl-strict+DXInv}.

Note that we can assume without loss of generality that $\vecx_0 = \zero$, and so $X$ is affinely independent if and only of $\{\vecx_1,\dots,\vecx_m\}$ is linearly independent.

Suppose then that $X$ is affinely dependent. Then there exist real constants $c_1,\dots,c_m$, not all zero, such that $\sum_{j=1}^m c_j \vecx_j = \zero$. That is, for $1 \le k \le n$, $\sum_{j=1}^m c_j w_k b_{j,k} = 0$. Now let $c_0 = - \sum_{j=1}^m c_j$, let $\vecc = (c_0,c_1,\dots,c_m)^T$ and let
$D_X \vecc = \vecv = (v_0,v_1,\dots,v_m)^T$.

Recall that if $b,b' \in \{0, 1\}$, then $|b-b'| = b - 2bb' + b'$. 
Thus, for $0 \le i \le m$,
  \begin{align*}
  v_i &= \sum_{j=0}^m d(\vecx_i,\vecx_j) c_j \\
  &= d(\vecx_i, \vecx_0) c_0 + \sum_{j=1}^m \sum_{k=1}^n w_k |b_{i,k} - b_{j,k}| c_j \\
  &= \sum_{k=1}^m w_k b_{i,k} c_0
     + \sum_{j=1}^m \sum_{k=1}^n w_k (b_{i,k}-2b_{i,k} b_{j,k} + b_{j,k}) c_j \\
  &= \sum_{k=1}^m w_k b_{i,k} c_0
     + \sum_{j=1}^m \sum_{k=1}^n w_k  b_{i,k} c_j
       + \sum_{k=1}^n (1-2b_{i,k}) \sum_{j=1}^m w_k c_j b_{j,k} \\
  &= \sum_{k=1}^m w_k b_{i,k} c_0
        + \Big(\sum_{j=1}^m c_j\Bigr) \Bigl(\sum_{k=1}^n w_k b_{i,k} \Bigr) \\
  &=(c_0-c_0) \sum_{k=1}^n w_k b_{i,k} \\
  &=0.
  \end{align*}
Thus $D_X \vecc = \zero$. 
It follows that $D_X$ is not invertible, and so $\det(D_X) = 0$.
\end{proof}

We can now give an extension of Theorem~\ref{det-w-tree} from trees to more general affinely independent sets of full dimension in $H_\WS$. 

\begin{theorem}\label{detDX-full-dim} Suppose that $X = \{\vecx_0 = \zero,\vecx_1,\dots,\vecx_n\} \subseteq H_\WS \subseteq \mR^n$ is affinely independent. Then
  \[ \det(D_X) = (-1)^n 2^{n-1} \Bigl(\prod_{k=1}^n w_k\Bigr)
                     \Bigl( \sum_{k=1}^n w_k\Bigr) \det(\hat B)^2.\]
\end{theorem}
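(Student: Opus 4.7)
The plan is to start from Lemma~\ref{detDX} in the special case $m = n$, then to apply the determinant identity (\ref{det-ident}), and finally to compute the two factors $\det(G_W)$ and $\ip<G_W^{-1}\vecu_X,\vecu_X>$ explicitly using the fact that in this full-dimensional setting the matrix $B$ is square and invertible.

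First, by the assumed reflection symmetry I may take $\vecx_0 = \zero$, so that $X$ being affinely independent is equivalent to $\{\vecx_1,\ldots,\vecx_n\}$ being linearly independent. In this $n \times n$ setting the matrix $\hat B$ (whose rows are $\hatx_1,\ldots,\hatx_n \in H_n$) is therefore invertible, and hence so is $B = \hat B W$. Consequently $G_W = B W^{-1} B^T$ is invertible, which, together with Lemma~\ref{detDX} and identity (\ref{det-ident}), yields
\begin{equation*}
   \det(D_X) \;=\; (-1)^{n-1} 2^{n-1} \cdot \bigl(-\det(G_W)\bigr) \ip<G_W^{-1}\vecu_X,\vecu_X>.
\end{equation*}

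Next I would rewrite the Gram matrix as $G_W = \hat B\, W\, \hat B^T$, which is immediate from the definition of $\ip<\cdot,\cdot>_W$ once one expands in the basis $\{\vece_k\}$. Since $\hat B$ is square, this gives at once
\begin{equation*}
    \det(G_W) \;=\; \det(\hat B)^2 \prod_{k=1}^n w_k.
\end{equation*}

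For the quadratic form, the key observation is that $\vecu_X = B\one$: indeed the $i$th entry of $B\one$ is $\sum_k w_k b_{i,k} = \norm{\vecx_i}_1$. Using this together with $G_W = B W^{-1} B^T$ and the invertibility of $B$, I compute
\begin{equation*}
   G_W^{-1} \vecu_X \;=\; (B^T)^{-1} W B^{-1} (B\one) \;=\; (B^T)^{-1} W \one,
\end{equation*}
and therefore
\begin{equation*}
   \ip<G_W^{-1}\vecu_X,\vecu_X> \;=\; \ip<(B^T)^{-1} W\one,\, B\one> \;=\; \ip<W\one,\one> \;=\; \sum_{k=1}^n w_k.
\end{equation*}
Substituting these two evaluations into the displayed expression for $\det(D_X)$ and collecting signs gives exactly the claimed formula
\begin{equation*}
   \det(D_X) \;=\; (-1)^n 2^{n-1} \Bigl(\prod_{k=1}^n w_k\Bigr)\Bigl(\sum_{k=1}^n w_k\Bigr) \det(\hat B)^2.
\end{equation*}

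I do not anticipate any real obstacle here: the whole argument amounts to recognising that $\vecu_X = B\one$ and then cancelling $B^{-1}B$. The one subtlety to be careful about is confirming that $B$ is genuinely invertible in the full-dimensional affinely independent case (so that the manipulations $B^{-1}B = I$ are legitimate), and that the sign $(-1)^{m-1}(-1) = (-1)^n$ comes out correctly when $m = n$; both are immediate from the setup once $\vecx_0 = \zero$ is fixed.
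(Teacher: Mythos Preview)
Your proposal is correct and follows essentially the same route as the paper: invoke Lemma~\ref{detDX} and identity~(\ref{det-ident}), then use $G_W = \hatB\, W\, \hatB^T$ to compute $\det(G_W)$ and the observation $\vecu_X = B\one$ together with the invertibility of $B$ to evaluate $\ip<G_W^{-1}\vecu_X,\vecu_X> = \sum_k w_k$. The only redundancy is that $\vecx_0 = \zero$ is already part of the hypothesis, so no reflection argument is needed.
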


\begin{proof}
As $X$ is affinely independent, Lemma~\ref{hatG-lem} implies that both $G_W$ and $B$ are invertible. 
It follows from Lemma~\ref{detDX} and (\ref{det-ident}) that
  \[ \det(D_X) = (-1)^n 2^{n-1} \det(G_W) \ip<G_W^{-1}\vecu,\vecu>. \]
Now $G_W = B\,W^{-1}B = \hatB\, W \hatB^T$ so $\det(G_W) = \Bigl(\prod_{i=k}^n w_k\Bigr) \det(\hat B)^2$.

Now $G_W^{-1} = (B^T)^{-1} W B^{-1}$. Note that the vector $\vecu$ from the previous section can be written as $\vecu = B \one$. Thus
  \begin{align*}
   \ip< G_W^{-1} \vecu, \vecu>
      &= \ip< (B^T)^{-1} W B^{-1} B \one, B \one> \\
      &= \ip<  (B^{-1})^T W \one, B \one> \\
      &= \ip< W \one,\one> \\
      &= \sum_{k=1}^n w_k
  \end{align*}
which completes the proof.
\end{proof}

Let $\hatX$ be an $(n+1)$-element subset of $H_n$. One can `stretch' the hypercube by factors $w_1,\dots,w_n$ to give a subset $X \subseteq H_\WS$. What the previous theorem shows is that the determinant of the distance matrix for $X$ depends only on the shape of the original set $\hatX$ and on the set of edge weights, but not on which factor is applied to which direction. 

\section{Affine independence and strict $1$-negative type}\label{S:Aff-strict}

As noted earlier, every nonempty subset of a Hamming cube is of $1$-negative type.
Murugan \cite{Mur} showed that a subset $X$ of the Hamming cube is of strict $1$-negative type if and only if it is affinely independent as a subset of $\mR^n$. 
Combining Theorems \ref{DXInv-iff-strict} and \ref{AI-iff-invert} allows us to extend Murugan's result to subsets of $H_\WS$.

\begin{theorem}\label{Mur-plus}
A nontrivial subset $X$ of $H_\WS$ is of strict $1$-negative type if and only if it is affinely independent.
\end{theorem}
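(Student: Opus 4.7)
The plan is to derive this statement as an immediate corollary by chaining the two equivalences already established earlier in the paper, with $D_X$ invertibility serving as the bridge between the geometric property of affine independence and the analytic property of strict $1$-negative type.

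More concretely, I would proceed as follows. Since $H_\WS$ is a subset of $(\mR^n, \norm{\cdot}_1)$ and $X$ is finite, Theorem \ref{DXInv-iff-strict} applies directly to $(X, d)$ and tells us that $D_X$ is invertible if and only if $(X, d)$ is of strict $1$-negative type. On the other hand, because $X \subseteq H_\WS$, Theorem \ref{AI-iff-invert} applies and yields that $X$ is affinely independent if and only if $\det(D_X) \neq 0$, i.e., if and only if $D_X$ is invertible. Concatenating these two equivalences immediately produces the desired biconditional.

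There is essentially no obstacle once the earlier infrastructure is in place; all of the work has already been done in Section \ref{S:HammingCubes} (to prove the affine independence $\Leftrightarrow$ invertibility equivalence, which required the Gram/Cauchy--Binet machinery and Lemma \ref{hatG-lem}) and in the background material (Theorem \ref{DXInv-iff-strict}, which in turn rested on S\'anchez's formula in Theorem \ref{Sanchez} together with the finiteness of the $M$-constant from Proposition \ref{NW4.7}). The only thing worth double-checking in the write-up is that the hypothesis ``nontrivial'' (i.e.\ $|X| \ge 2$) is precisely what is needed to invoke the earlier results, which assume a distance matrix of size at least $2 \times 2$; this is immediate from the standing conventions adopted in Section \ref{S:Background}.

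Thus the proof is a two-line citation-based argument, and no new calculations are required. It is worth remarking (though not necessary for the proof itself) that this gives, in the setting of weighted Hamming cubes, the equivalence of conditions (1), (2) and (3) of Theorem \ref{omnibus}, consolidating Murugan's unweighted result with the weighted determinant formula of Theorem \ref{detDX-full-dim}.
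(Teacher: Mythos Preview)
Your proposal is correct and matches the paper's approach exactly: the paper also derives the result by combining Theorem~\ref{DXInv-iff-strict} (invertibility of $D_X$ $\Leftrightarrow$ strict $1$-negative type for finite subsets of $(\mR^n,\norm{\cdot}_1)$) with Theorem~\ref{AI-iff-invert} (affine independence $\Leftrightarrow$ $\det(D_X)\ne 0$ for subsets of $H_\WS$). There is nothing to add.
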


Theorem~\ref{Mur-plus} completes the proof of our main result, Theorem~\ref{omnibus}. As noted earlier, conditions (3), (4), (5), (6) and (7) are equivalent in the category of finite metric spaces. That (1) and (2) are equivalent for nontrivial subsets of $H_\WS$ is Theorem~\ref{AI-iff-invert}. The equivalence of (2) and (3) is Theorem~\ref{DXInv-iff-strict}. 


\section{The $M$-constant for subsets
of $H_\WS$}\label{S:M-const}

As noted in \cite{NW3}, the $M$-constant of a finite metric space of $1$-negative type contains geometric information. We begin by
recalling the notion of an $S$-embedding.

\begin{definition}
    Let $(X, d)$ be a metric space and let $Y$ be a real or
    complex inner product space with induced norm
    $\norm{\cdot}_{2}$. A map $\iota: X \to Y$ is an
    \textbf{$S$-embedding} if
    $\norm{\iota(x) - \iota(y)}_2 = d(x, y)^{1/2}$
    for all $x, y \in X$.
\end{definition}

The following result is due to Nickolas and Wolf
\cite[Theorem~3.2]{NW3}.

\begin{theorem}\label{min-sphere} 
Let $X = \{x_0,\dots,x_m\}$ be a finite metric space of $1$-negative type and let $\iota: X \to \mR^n$ be an S-embedding of $X$. Suppose that $\iota(X)$ lies on a sphere of radius $r$ with centre $c$ which lies inside the affine hull of $\{\iota(x_0), \dots, \iota(x_m)\}$. Then
  \[ M(X) = 2r^2.\]
\end{theorem}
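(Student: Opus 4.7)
The plan is to expand $\langle D_X \xi, \xi\rangle$ using the $S$-embedding in order to convert the problem into an elementary calculation in $\mR^n$. Writing $v_i = \iota(x_i)$, the $S$-embedding property gives $d(x_i,x_j) = \|v_i - v_j\|_2^2$, so for any $\xi = (\xi_0,\dots,\xi_m)^T \in F_1$,
\begin{align*}
\ip<D_X \xi,\xi>
  &= \sum_{i,j} \|v_i - v_j\|_2^2\, \xi_i \xi_j \\
  &= \sum_{i,j} \bigl(\|v_i\|_2^2 + \|v_j\|_2^2 - 2\ip<v_i,v_j>\bigr) \xi_i \xi_j \\
  &= 2\Bigl(\sum_i \xi_i\Bigr)\Bigl(\sum_i \xi_i \|v_i\|_2^2\Bigr)
       - 2\Bigl\|\sum_i \xi_i v_i\Bigr\|_2^2.
\end{align*}

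Next, I would use the hypothesis that the $v_i$ all lie on a sphere of radius $r$ centred at $c$. Setting $u_i = v_i - c$, so that $\|u_i\|_2 = r$, and exploiting $\sum_i \xi_i = 1$, a short calculation expands $\|v_i\|_2^2 = \|c\|_2^2 + 2\ip<c,u_i> + r^2$ and $\sum_i \xi_i v_i = c + \sum_i \xi_i u_i$, so that after substitution the cross terms involving $\langle c, \sum_i \xi_i u_i \rangle$ cancel cleanly, yielding the key identity
\begin{equation*}
\ip<D_X \xi,\xi> = 2r^2 - 2\Bigl\|\sum_i \xi_i u_i\Bigr\|_2^2 \qquad (\xi \in F_1).
\end{equation*}
This immediately gives the upper bound $M(X) \le 2r^2$.

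To finish, I would show that the supremum is actually attained. The hypothesis that $c$ lies in the affine hull of $\{v_0,\dots,v_m\}$ means there exist real coefficients $\xi_0,\dots,\xi_m$ with $\sum_i \xi_i = 1$ and $c = \sum_i \xi_i v_i$. For this choice $\sum_i \xi_i u_i = \sum_i \xi_i v_i - c = 0$, so $\xi \in F_1$ attains $\ip<D_X\xi,\xi> = 2r^2$, and $M(X) = 2r^2$ as required.

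The proof is essentially a direct computation once the $S$-embedding is unpacked, so there is no serious obstacle; the only subtlety is the careful bookkeeping that makes the $\ip<c,\sum_i \xi_i u_i>$ terms cancel, which is exactly where the hypothesis $\sum_i \xi_i = 1$ is used. The role of the centre-in-affine-hull assumption is revealed only at the final step, where it converts the bound $M(X) \le 2r^2$ into an equality.
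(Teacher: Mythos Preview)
Your argument is correct. Note, however, that the paper does not actually supply its own proof of this theorem: it is quoted as a known result of Nickolas and Wolf \cite[Theorem~3.2]{NW3}, so there is nothing in the paper to compare your proof against. Your direct computation via the identity
\[
\ip<D_X \bxi,\bxi> = 2r^2 - 2\Bigl\|\sum_i \xi_i u_i\Bigr\|_2^2 \qquad (\bxi \in F_1)
\]
is a clean self-contained proof, and it correctly isolates where each hypothesis is used: the sphere condition gives the upper bound, and the affine-hull condition on the centre gives attainment.
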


If $X \subseteq (H_n,\norm{\cdot}_1)$, the identity map is an $S$-embedding and the above result was used in \cite{DWo} to give a formula for $M(X)$ and $\ip<D_X^{-1} \one,\one>$, resolving a conjecture from \cite{DRSW}. A similar formula holds for subsets of $H_\WS$, although in this setting one needs to modify the argument slightly to deal with the issue that the identity map is no longer an S-embedding.

As in Section \ref{S:HammingCubes},
given a weighted Hamming cube $H_{\WS}$ with weight list
$\WS = [w_{1}, w_{2}, \ldots, w_{n}]$, we let
$W = \mathrm{diag}(w_1,\dots,w_n)$.
This allows us to express each
$\vecx \in H_{\WS}$ as
  \[ \vecx = \sum\limits_{k=1}^{n} w_{k}b_{k}\vece_{k} 
      = W \cdot \sum\limits_{k=1}^{n} b_{k}\vece_{k} 
     = W \hatx\]
where each $b_{k} \in \{ 0,1 \}$ and $\vece_{k}$ denotes the $k$th
standard basis vector in $\mathbb{R}^{n}$. One may then easily verify
that the map $\iota : H_{\WS} \rightarrow \mathbb{R}^{n}$ given by
  \[ \iota(\vecx) = \sum\limits_{k=1}^{n} \sqrt{w_{k}}b_{k}\vece_{i} 
             = W^{-\frac{1}{2}} \vecx 
             = W^{\frac{1}{2}} \hatx  \]
is an $S$-embedding. We will call this map the \textbf{natural}
$S$-embedding of $H_{\WS}$ into $\mathbb{R}^{n}$.
Moreover, as a map from $\mathbb{R}^{n}$ to
$\mathbb{R}^{n}$, $\iota$ is linear and invertible.

Let $\WS(\frac{1}{2}) = [\sqrt{w_1},\dots,\sqrt{w_n}]$ so that $H_{\WS(1/2)}$ is the image of $H_\WS$ under $\iota$. The set $H_{\WS(1/2)}$, and hence also $\iota(X)$, lie in a sphere with centre $\vech = \frac{1}{2} \bigl(\sqrt{w_1},\dots,\sqrt{w_n}\bigr)^{T}$ and radius $r$ where
  \[ r^2 = \frac{1}{4} \sum_{k=1}^n w_k. \]
So the issue in applying Theorem~\ref{min-sphere} is whether $\vech$ lies in the affine hull of $\iota(X)$. Of course if the affine hull of $\iota(X)$ is all of $\mR^n$ then this is automatic, as it is if $X$ contains two antipodal points in $H_\WS$. In particular, if $X$ has $n+1$ points and is affinely independent then
  \[ M(X) = \frac{1}{2} \sum_{k=1}^n w_k\]
independent of the actual position of the points of $X$. Specifically, this applies to the case of metric trees.

\begin{proposition}
Suppose that $X$ is an $(n+1)$-point metric tree with edge weights $w_1,\dots,w_n$. Then $M(X) = \tfrac{1}{2}\sum_{k=1}^n w_k$, independent of the structure of the tree and of how the weights are assigned to the edges.  
\end{proposition}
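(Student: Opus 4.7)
The plan is to realise the tree as an affinely independent subset of a weighted Hamming cube of matching dimension and then read off the $M$-constant from the $S$-embedding machinery developed in this section.

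First I would invoke the embedding statement recorded in the introduction: any $(n+1)$-point weighted metric tree $T$ with edge weights $w_1,\dots,w_n$ can be isometrically embedded in $H_\WS$, with $\WS = [w_1,\dots,w_n]$, as an affinely independent set $X = \{\vecx_0,\dots,\vecx_n\}$. Since the $M$-constant is a metric invariant, it suffices to compute $M(X)$ where $X$ sits inside $H_\WS \subseteq \mR^n$.

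Next I would pass to the natural $S$-embedding $\iota: H_\WS \to \mR^n$ constructed above, so that $\iota(X)$ lies on the sphere centred at $\vech = \tfrac{1}{2}(\sqrt{w_1},\dots,\sqrt{w_n})^T$ of squared radius $r^2 = \tfrac{1}{4}\sum_{k=1}^n w_k$. Because $\iota$ is a linear bijection of $\mR^n$, it preserves affine independence, so $\iota(X)$ consists of $n+1$ affinely independent points in $\mR^n$; hence its affine hull is all of $\mR^n$ and in particular contains the sphere centre $\vech$. Theorem~\ref{min-sphere} then applies and yields
\[
M(X) = 2r^2 = \frac{1}{2}\sum_{k=1}^n w_k,
\]
which is exactly the claim. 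Note that this value manifestly depends only on the multiset of edge weights and not on the tree structure nor on which weight is attached to which edge, since the formula is symmetric in the $w_k$'s.

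The only place that requires any care is verifying that the hypotheses of Theorem~\ref{min-sphere} are met, i.e.\ that $\vech$ sits inside the affine hull of $\iota(X)$; this is precisely why I need the embedding of $T$ to be into $H_\WS \subseteq \mR^n$ with the matching dimension $n$ (so that $n+1$ affinely independent image points force the affine hull to be all of $\mR^n$). This is the only substantive step, and it is delivered by the well-known algorithmic embedding of a weighted tree into its weighted cube; every other step is a direct appeal to results already established in this paper.
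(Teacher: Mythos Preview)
Your proposal is correct and follows essentially the same argument as the paper: embed the tree as an affinely independent $(n+1)$-point subset of $H_\WS \subseteq \mR^n$, pass to the natural $S$-embedding, note that $n+1$ affinely independent image points have affine hull all of $\mR^n$ so the centre $\vech$ lies in it, and apply Theorem~\ref{min-sphere}. The paper presents this reasoning in the paragraph immediately preceding the proposition rather than as a separate proof, but the content is the same.
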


More generally, extending \cite[Theorem~5.1]{DWo} we have the following. Given $X \subseteq H_\WS$ let $Z_X$ denote the (smallest) affine subspace of $\mR^n$ which contains $\iota(X)$. For $\vecx \in \mR^n$, let $d_2(\vecx,Z_X)$ denote the Euclidean distance from $\vecx$ to this subspace. 

\begin{theorem}\label{M-in-HWS}
Suppose that $X \subseteq H_\WS$. Then
  \[ M(X) = \frac{1}{2} \sum_{k=1}^n w_k - 2 d_2(\vech,Z_X)^2. \] 
\end{theorem}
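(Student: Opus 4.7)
The plan is to reduce to Theorem~\ref{min-sphere} by relocating the centre of the sphere containing $\iota(X)$ so that it lies in the affine hull $Z_X$. Subsets of $H_\WS \subseteq (\mR^n, \norm{\cdot}_1)$ are automatically of $1$-negative type, and the natural $S$-embedding $\iota$ already shows that $\iota(X)$ lies on the sphere with centre $\vech$ and radius $r$ satisfying $r^2 = \frac{1}{4}\sum_{k=1}^n w_k$. Theorem~\ref{min-sphere} cannot be applied directly because in general $\vech \notin Z_X$; this is precisely the obstacle that the correction term in the formula will quantify.

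The key step will be to replace $\vech$ by its orthogonal projection $\vech'$ onto $Z_X$. Since $\vech - \vech'$ is orthogonal to the linear subspace $Z_X - \vech'$, for every $\vecy \in \iota(X) \subseteq Z_X$ Pythagoras gives
\[ \norm{\vecy - \vech}_2^2 = \norm{\vecy - \vech'}_2^2 + \norm{\vech' - \vech}_2^2 = \norm{\vecy - \vech'}_2^2 + d_2(\vech, Z_X)^2. \]
Since the left side equals $r^2$ uniformly in $\vecy$, the set $\iota(X)$ lies on a new sphere in $\mR^n$ with centre $\vech'$ and radius $r'$ given by $(r')^2 = r^2 - d_2(\vech, Z_X)^2$. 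By construction $\vech' \in Z_X$, so the hypothesis of Theorem~\ref{min-sphere} about the centre lying in the affine hull of $\iota(X)$ is now met.

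With this adjusted sphere in hand, I would invoke Theorem~\ref{min-sphere} to obtain
\[ M(X) = 2(r')^2 = 2r^2 - 2d_2(\vech, Z_X)^2 = \frac{1}{2}\sum_{k=1}^n w_k - 2 d_2(\vech, Z_X)^2, \]
which is the claim. There is no genuine obstacle here beyond recognising that orthogonally projecting the natural sphere's centre onto $Z_X$ preserves the property of containing $\iota(X)$; the Pythagorean identity is exactly what converts the excess distance $d_2(\vech, Z_X)$ into the correction term, and the formula degenerates correctly to $M(X) = \tfrac{1}{2}\sum_k w_k$ in the full-dimensional case treated in the preceding proposition.
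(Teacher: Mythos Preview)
Your argument is correct and matches the approach the paper has in mind: the paper simply notes that the proof is essentially the same as in the unweighted Hamming cube case (\cite[Theorem~5.1]{DWo}), and that argument is precisely the orthogonal-projection-plus-Pythagoras reduction to Theorem~\ref{min-sphere} that you carry out here.
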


The proof is essentially the same as the one in the Hamming cube case.

\begin{corollary}
    Suppose that $X$ is an affinely independent subset of $H_\WS$. Then
  \[ \ip<D_X^{-1} \one,\one> \ge 2 \Bigl(\sum_{k=1}^n w_k\Bigr)^{-1}, \]
  with equality if and only if $\vech \in Z_X$.
\end{corollary}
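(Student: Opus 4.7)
The plan is to derive this corollary directly by combining Theorem~\ref{M-in-HWS} with the results linking the $M$-constant to $\ip<D_X^{-1}\one,\one>$ from the Background section. Since $X$ is affinely independent in $H_\WS$, Theorem~\ref{Mur-plus} (the weighted extension of Murugan's result) guarantees that $(X,d)$ is of strict $1$-negative type. Proposition~\ref{DInv-iff-M_finite} then furnishes both that $D_X$ is invertible and, crucially, the identity
\[ M(X) = \frac{1}{\ip<D_X^{-1}\one,\one>}. \]

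Next I would invoke Theorem~\ref{M-in-HWS}, which evaluates the $M$-constant of $X$ explicitly as
\[ M(X) = \frac{1}{2}\sum_{k=1}^{n} w_k - 2\, d_2(\vech, Z_X)^2. \]
Because $d_2(\vech, Z_X)^2 \ge 0$, this immediately gives the upper bound $M(X) \le \tfrac{1}{2}\sum_{k=1}^{n} w_k$, with equality precisely when $\vech$ lies in the affine span $Z_X$ of $\iota(X)$.

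Finally, since each $w_k > 0$ and $M(X) > 0$ (as $X$ has at least two points and is of strict $1$-negative type), both sides of the inequality are strictly positive, so inverting preserves the direction of the inequality. Substituting $\ip<D_X^{-1}\one,\one> = 1/M(X)$ produces
\[ \ip<D_X^{-1}\one,\one> \ge \frac{2}{\sum_{k=1}^{n} w_k}, \]
with equality exactly when $\vech \in Z_X$. There is no serious obstacle here beyond assembling the quoted results in the correct order; the only point requiring a moment's care is verifying that the sign of $M(X)$ permits the reciprocal step, which is immediate from strict $1$-negative type.
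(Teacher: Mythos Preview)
Your proposal is correct and follows essentially the same approach as the paper's proof: invoke strict $1$-negative type (the paper cites Theorem~\ref{omnibus}, you cite Theorem~\ref{Mur-plus}, which amounts to the same thing), apply Proposition~\ref{DInv-iff-M_finite} to get $\ip<D_X^{-1}\one,\one> = M(X)^{-1}$, and then substitute the formula for $M(X)$ from Theorem~\ref{M-in-HWS}. Your added remark justifying the reciprocal step via positivity of $M(X)$ is a small elaboration beyond what the paper writes, but the argument is otherwise identical.
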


\begin{proof}
By Theorem~\ref{omnibus}, $(X,d)$ is of strict $1$-negative type and $D_X$ is invertible. Proposition~\ref{DInv-iff-M_finite} then gives that
  \[ \ip<D_X^{-1} \one,\one>
     = M(X)^{-1} 
     = \Bigl(\tfrac{1}{2}\sum_{k=1}^n w_k - 2 d_2(\vech,Z_X)^2 \Bigr)^{-1}
     \]
from which the conclusion follows.
\end{proof}

We can now extend the result of Graham, Hoffman and Hosoya \cite{GHH} on cofactor sums mentioned in the introduction.
As in Section \ref{S:HammingCubes}, given a set
$X = \{\vecx_0 = \zero,\vecx_1,\dots,\vecx_m\} \subseteq H_\WS$,
$B$ is the $m \times n$ matrix whose $i$th
row is $\vecx_i$ and $\hatB$ is the $m \times n$ matrix
whose $i$th row is $\hatx_i$.

\begin{corollary}
Suppose that
$X = \{\vecx_0 = \zero,\vecx_1,\dots,\vecx_n\}$
is an $(n+1)$-point subset of $H_\WS$. Then
  \[ \cof(D_X) = (-1)^n \, 2^n \Bigl( \prod_{k=1}^n w_k \Bigr) \det(\hatB)^2. \]
\end{corollary}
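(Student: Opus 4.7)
The plan is to interpret $\cof(D_X)$ via the adjugate identity and then combine the preceding two results. I would first note that for any invertible square matrix $A$, the cofactor sum equals the sum of entries of the adjugate $\mathrm{adj}(A)=\det(A)\,A^{-1}$, giving the identity $\cof(A)=\det(A)\,\ip<A^{-1}\one,\one>$.

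The main case is when $X$ is affinely independent. Here Theorem~\ref{AI-iff-invert} ensures $D_X$ is invertible, so the adjugate identity applies. I would observe that the natural $S$-embedding $\iota$ is a linear bijection of $\mR^n$, so it preserves affine independence; thus $\iota(X)$ is an affinely independent set of $n+1$ points in $\mR^n$, forcing $Z_X=\mR^n$ and $\vech\in Z_X$. The equality case of the preceding corollary then gives $\ip<D_X^{-1}\one,\one>=2\bigl(\sum_{k=1}^n w_k\bigr)^{-1}$, while Theorem~\ref{detDX-full-dim} gives $\det(D_X)=(-1)^n\,2^{n-1}\bigl(\prod_{k=1}^n w_k\bigr)\bigl(\sum_{k=1}^n w_k\bigr)\det(\hatB)^2$. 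Multiplying these yields the stated formula.

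For the remaining case in which $X$ is affinely dependent, I would show that both sides vanish. Lemma~\ref{hatG-lem} already gives $\det(\hatB)=0$, so the right-hand side is zero. For the left-hand side, if $\rank(D_X)<n$ then every $n\times n$ minor of $D_X$ is zero and $\cof(D_X)=0$. Otherwise $\rank(D_X)=n$; the proof of Theorem~\ref{AI-iff-invert} exhibits a nonzero $\vecc\in\ker D_X$ with $\ip<\vecc,\one>=0$. Symmetry of $D_X$ forces $\mathrm{adj}(D_X)$ to be symmetric, and $D_X\cdot\mathrm{adj}(D_X)=\zero$ places each column in $\ker D_X=\mathrm{span}(\vecc)$; hence $\mathrm{adj}(D_X)=\mu\,\vecc\vecc^T$ for some scalar $\mu$, giving $\cof(D_X)=\mu\,\ip<\vecc,\one>^2=0$.

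The main obstacle is essentially bookkeeping, since all the algebraic and geometric input is already in place. The slightly subtle point is the affinely dependent case, which does not come directly from the adjugate identity but is handled by the short rank-$n$ argument above using symmetry of the adjugate.
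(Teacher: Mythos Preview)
Your argument is correct. In the affinely independent case it coincides with the paper's: both invoke the identity $\cof(D_X)=\det(D_X)\,\ip<D_X^{-1}\one,\one>$ and then substitute the formulas from Theorem~\ref{detDX-full-dim} and the preceding corollary (the paper phrases this as ``Cramer's rule'' but it is the same computation; your extra remark that $\iota$ is a linear bijection so $Z_X=\mR^n$ just makes explicit why the equality case of that corollary applies).

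The affinely dependent case is where the two proofs diverge. The paper simply appeals to a cofactor expansion of $\det(D_X)=0$ along each row and sums. You instead analyse the adjugate directly: if $\rank(D_X)<n$ every cofactor vanishes, while if $\rank(D_X)=n$ you use that $\ker(D_X)=\mathrm{span}(\vecc)$ with $\vecc\in F_0$ (supplied by the proof of Theorem~\ref{AI-iff-invert}) together with symmetry to write $\mathrm{adj}(D_X)=\mu\,\vecc\vecc^T$, whence $\cof(D_X)=\mu\,\ip<\vecc,\one>^2=0$. Your route has the virtue of making explicit exactly which structural fact is doing the work---namely that the kernel vector lies in $F_0$---and this same mechanism reappears later in the paper (Corollary~\ref{s-san-i} shows $\ker(D_X)\subseteq F_0$ in general for subsets of $H_\WS$). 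The paper's version is terser; yours is more transparent and self-contained.
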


\begin{proof}
If $X$ is affinely independent then $D_X$ is invertible and so the identity follows immediately from Cramer's rule, 
  $\det(D_X) \ip<D_X^{-1} \one,\one> = \cof(D_X)$, 
and the above formulas.

If $X$ is not affinely independent then $\det(\hatB) = 0$. By Theorem~\ref{AI-iff-invert} we know that $D_X$ is singular.
The standard 
cofactor expansion formula then gives that for each $i$
  \[ 0 = \det(D_X) = \sum_{j=0}^n (-1)^{i+j} 
            \det\bigl((D_X)_{i|j}\bigr)\]
and so summing over $i$ shows that $\cof(X) = 0$ which gives the required result.
\end{proof}


\section{Polygonal equalities admitted in weighted
Hamming cubes}\label{S:Poly-ineq}

 We now consider the problem of characterizing the nontrivial
$1$-polygonal equalities admitted in weighted Hamming cubes
$H_{\WS}$. In order to access these
equalities we begin with several lemmas.

Given a finite set $X = \{ \vecx_{0}, \vecx_{1}, \ldots, \vecx_{m}\}$ in a vector space $Y$, let
  \[ \mathcal{F}_0(X) = \{\bxi = (\xi_0,\xi_1,\dots,\xi_m)^{T}
  \in F_0 \st 
        \xi_{0} \vecx_{0} + \xi_{1}\vecx_{1} + \cdots
                         + \xi_{m}\vecx_{m} = \mathbf{0} \}. \]

\begin{lemma}\label{ip-lem}
    Let $(Y, \ip<\cdot\,, \cdot>)$ be a real inner
    or complex product space
    with induced norm $\norm{\cdot}_{2}$. For each finite subset
    $X = \{ \vecx_{0}, \vecx_{1}, \ldots, \vecx_{m}\} \subseteq Y$
    and each
    vector $\bxi = (\xi_{0}, \xi_{1}, \ldots, \xi_{m})^{T}
    \in F_{0}$
    we have:
    \[
    \ip<D_{X,2}\bxi, \bxi> =
    -2 \snorm{\sum\limits_{i} \xi_{i} \vecx_{i}}_{2}^{2}.
    \]
    Consequently, 
    \[ \mathcal{F}_0(X) 
          = \{ \bxi \in F_0 \st \ip<D_{X,2} \bxi,\bxi> = \zero\} . \]
\end{lemma}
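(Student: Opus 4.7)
The plan is to expand $d(\vecx_i, \vecx_j)^2 = \norm{\vecx_i - \vecx_j}_2^2$ in the sum $\ip<D_{X,2}\bxi, \bxi> = \sum_{i,j} \norm{\vecx_i - \vecx_j}_2^2 \, \xi_i \xi_j$ via the polarization identity and exploit the condition $\sum_i \xi_i = 0$ coming from $\bxi \in F_0$. Since the inner product space $Y$ may be complex but the scalars $\xi_i$ are real, we record the identity
\[
\norm{\vecx_i - \vecx_j}_2^2 = \norm{\vecx_i}_2^2 + \norm{\vecx_j}_2^2 - 2\,\mathrm{Re}\,\ip<\vecx_i, \vecx_j>,
\]
which holds in any real or complex inner product space.

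Inserting this into the defining sum and splitting into three pieces, I would compute
\[
\ip<D_{X,2}\bxi, \bxi>
= \sum_{i,j} \norm{\vecx_i}_2^2 \xi_i \xi_j
+ \sum_{i,j} \norm{\vecx_j}_2^2 \xi_i \xi_j
- 2 \sum_{i,j} \mathrm{Re}\,\ip<\vecx_i, \vecx_j>\, \xi_i \xi_j.
\]
The first two sums factor as $\bigl(\sum_i \norm{\vecx_i}_2^2 \xi_i\bigr)\bigl(\sum_j \xi_j\bigr)$ and its mirror image, both of which vanish because $\bxi \in F_0$ forces $\sum_j \xi_j = 0$. For the remaining term, using that each $\xi_i$ is real (hence equals its own conjugate) one can pull the sum inside the inner product to obtain
\[
\sum_{i,j} \mathrm{Re}\,\ip<\vecx_i,\vecx_j>\, \xi_i \xi_j
= \mathrm{Re}\,\Bigl\langle \sum_i \xi_i \vecx_i, \sum_j \xi_j \vecx_j \Bigr\rangle
= \snorm{\sum_i \xi_i \vecx_i}_2^2,
\]
which is automatically real. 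Combining these observations yields the announced identity $\ip<D_{X,2}\bxi, \bxi> = -2\snorm{\sum_i \xi_i \vecx_i}_2^2$.

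The consequence is then a one-line corollary: since the right-hand side is a non-positive real number which vanishes precisely when $\sum_i \xi_i \vecx_i = \zero$, we have $\ip<D_{X,2}\bxi, \bxi> = 0$ if and only if $\bxi \in \mathcal{F}_0(X)$. I do not anticipate a real obstacle here; the only point worth being careful about is the conjugate-linearity convention in the complex case, which is harmless because the coefficients $\xi_i$ are real, and the bookkeeping that confirms both off-diagonal square-norm sums cancel under the $F_0$ condition.
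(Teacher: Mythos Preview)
Your proof is correct and follows essentially the same expansion as the paper's own argument: both expand $\norm{\vecx_i-\vecx_j}_2^2$ via the inner product, use $\sum_i\xi_i=0$ to kill the pure-norm terms, and collapse the cross term to $-2\snorm{\sum_i\xi_i\vecx_i}_2^2$. Your explicit handling of the complex case via $\mathrm{Re}\,\ip<\vecx_i,\vecx_j>$ is a minor cosmetic variant of the paper's use of the symmetrized sum $\ip<\xi_i\vecx_i,\xi_j\vecx_j>+\ip<\xi_j\vecx_j,\xi_i\vecx_i>$.
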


\begin{proof}
    Given such a finite set $X \subseteq Y$ and vector $\bxi \in F_{0}$
    we see that:
    \begin{align*}
 \ip<D_{X,2} \bxi,\bxi> 
    &= \sum_{i,j} \xi_i \xi_j \norm{\vecx_i - \vecx_j}_2^2 \\
    &= \sum_{i,j} \xi_i \xi_j
    \ip<\vecx_i - \vecx_j,\vecx_i - \vecx_j> \\
    &= \sum_j \xi_j \cdot \sum_i \xi_i \norm{\vecx_i}_2^2
       - \sum_{i,j} (\ip<\xi_i \vecx_i, \xi_j \vecx_j>
       + \ip<\xi_j \vecx_j, \xi_i \vecx_i>)
       + \sum_i \xi_i \cdot \sum_j \xi_j \norm{\vecx_j}_2^2  \\
    &= - 2 \ip< \sum_i \xi_i \vecx_i, \sum_j \xi_j \vecx_j> \\
    &= -2 \snorm{\sum_i \xi_i \vecx_i}_2^2.         
  \end{align*} 
\end{proof}

\begin{lemma}\label{s-embed}
    If a finite metric space $(X, d) =
    (\{ x_{0}, x_{1}, \ldots, x_{m} \}, d)$ admits an
    $S$-embedding $\iota : X \rightarrow Y$, where $Y$
    is a real or complex inner product space
    with induced norm $\norm{\cdot}_{2}$, then for each
    vector $\bxi = (\xi_{0}, \xi_{1}, \ldots, \xi_{m})^{T}
    \in F_{0}$
    we have
    \begin{align}
        \ip<D_X \bxi,\bxi>
        &= -2 \snorm{\sum_i \xi_i \iota(x_i)}_2^2.
    \end{align}
\end{lemma}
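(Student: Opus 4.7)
The plan is to reduce Lemma~\ref{s-embed} directly to Lemma~\ref{ip-lem} by observing that the $S$-embedding hypothesis makes the distance matrix $D_X$ of $(X,d)$ literally coincide with the $2$-distance matrix of $\iota(X)$ viewed as a subset of $Y$.

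First I would unpack the defining property of an $S$-embedding: since $\norm{\iota(x_i) - \iota(x_j)}_2 = d(x_i,x_j)^{1/2}$, squaring yields $d(x_i,x_j) = \norm{\iota(x_i) - \iota(x_j)}_2^2$ for all $i,j$. Consequently, if we let $\iota(X) = \{\iota(x_0), \iota(x_1), \ldots, \iota(x_m)\} \subseteq Y$ and consider its $2$-distance matrix $D_{\iota(X),2}$ in the sense of the earlier definition, we have the identity of matrices
\[
D_X \;=\; D_{\iota(X),2}.
\]

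Second, I would apply Lemma~\ref{ip-lem} to the finite subset $\iota(X)$ of the inner product space $Y$ and the given vector $\bxi \in F_0$. Lemma~\ref{ip-lem} delivers
\[
\ip<D_{\iota(X),2}\bxi,\bxi> \;=\; -2 \snorm{\sum_i \xi_i \iota(x_i)}_2^2.
\]
Combining this with the matrix identity from the first step yields the claimed formula, completing the proof.

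There is no real obstacle here; the content of the lemma is essentially a bookkeeping observation that an $S$-embedding translates $1$-polygonal quantities for $(X,d)$ into $2$-polygonal quantities for $\iota(X)$ in $Y$, where the inner product computation of Lemma~\ref{ip-lem} applies directly. The only thing to be careful about is the complex case, but this is already handled inside Lemma~\ref{ip-lem}, so no separate argument is needed.
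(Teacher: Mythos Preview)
Your proposal is correct and follows essentially the same approach as the paper: identify $D_X$ with $D_{\iota(X),2}$ via the $S$-embedding property and then invoke Lemma~\ref{ip-lem}. The paper's proof is nearly identical, merely phrasing the matrix identification as an equality of the quadratic forms $\ip<D_X\bxi,\bxi> = \ip<D_{\iota(X),2}\bxi,\bxi>$ rather than of the matrices themselves.
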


\begin{proof}
    Let $D_{\iota(X),2}$ denote the $2$-distance matrix of the metric
    subset $\iota(X) \subseteq Y$. We are given that
    $\iota : (X, \sqrt{d}) \rightarrow Y$ is an isometry.
    Consequently, we have $\ip<D_X \bxi,\bxi> = \ip<D_{\iota(X),2} \bxi,\bxi>$.
    The lemma now follows by applying Lemma~\ref{ip-lem}
    to the finite subset $\iota(X) \subseteq Y$.
\end{proof}

Applying Lemma~\ref{ip-lem} and the underlying definitions gives the following.

\begin{lemma}\label{ip-eq}
    Let $(Y, \ip<\cdot\,, \cdot>)$ be a real or complex
    inner product space. The following conditions
    are equivalent for each finite subset
    $X = \{ \vecx_{0}, \vecx_{1}, \ldots, \vecx_{m}\}$ of $Y$:
    \begin{enumerate}[(i)]
        \item $X$ is affinely dependent
        when $Y$ is regarded as a vector space over $\mathbb{R}$.
        \item There exists a nonzero vector
        $\bxi = (\xi_{0}, \xi_{1}, \ldots, \xi_{m})^{T}
        \in F_{0}$ such that
        \[
        \xi_{0} \vecx_{0} + \xi_{1}\vecx_{1} + \cdots
        + \xi_{m}\vecx_{m} = \mathbf{0}.
        \]
        \item There exists a nonzero vector
        $\bxi = (\xi_{0}, \xi_{1}, \ldots, \xi_{m})^{T}
        \in F_{0}$ such that
        $\ip<D_{X,2}\bxi, \bxi> = 0$. In other words, $X$ admits a
        nontrivial $2$-polygonal equality.
        \item $\mathcal{F}_0(X) \ne \{\zero\}$. 
    \end{enumerate}
\end{lemma}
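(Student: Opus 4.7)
The plan is to observe that this lemma is largely a bookkeeping exercise once Lemma \ref{ip-lem} is in hand; none of the four conditions should require new work. I would run the equivalences as a short cycle, taking (ii) as the hub.

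First, (i) $\Longleftrightarrow$ (ii) is just the definition of affine dependence: a finite subset $\{\vecx_0,\dots,\vecx_m\}$ of a real vector space (and we are viewing $Y$ as a real vector space) is affinely dependent exactly when there exist real scalars $\xi_0,\dots,\xi_m$, not all zero, with $\sum_i \xi_i = 0$ and $\sum_i \xi_i \vecx_i = \mathbf{0}$. The condition $\sum_i \xi_i = 0$ is the membership in $F_0$, so (i) and (ii) match word for word. Similarly, (ii) $\Longleftrightarrow$ (iv) is immediate from the very definition of $\mathcal F_0(X)$ stated at the beginning of the section, since (iv) is just the assertion that $\mathcal F_0(X)$ contains a nonzero vector.

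Next, (ii) $\Longleftrightarrow$ (iii) is the substantive step, but Lemma \ref{ip-lem} already supplies the identity
\[
\ip<D_{X,2}\bxi,\bxi> \;=\; -2 \snorm{\sum_i \xi_i \vecx_i}_2^2
\]
for every $\bxi \in F_0$. Since $\norm{\cdot}_2$ is a norm, the right-hand side vanishes if and only if $\sum_i \xi_i \vecx_i = \mathbf{0}$; applied to a nonzero $\bxi \in F_0$, this gives the equivalence of (ii) and (iii). The ``in other words'' clause in (iii)---that $X$ admits a nontrivial $2$-polygonal equality---is then justified by citing Theorem \ref{p-poly-equiv-form}, which says precisely that the existence of a nonzero $\bxi \in F_0$ with $\ip<D_{X,p}\bxi,\bxi>=0$ is equivalent to the admissibility of a nontrivial $p$-polygonal equality (here $p=2$).

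There is no genuine obstacle: the one potential pitfall is simply to be careful that when $Y$ is complex we are viewing it as a real vector space when speaking of affine dependence, and that Lemma \ref{ip-lem} was stated for both the real and complex cases with the same real-scalar convention on $\bxi$. Once this is flagged, the three short paragraphs above complete the proof.
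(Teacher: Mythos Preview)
Your proposal is correct and matches the paper's approach exactly: the paper simply notes that the lemma follows by ``applying Lemma~\ref{ip-lem} and the underlying definitions,'' which is precisely the content of your three short paragraphs. Your added remark about viewing complex $Y$ as a real vector space and your citation of Theorem~\ref{p-poly-equiv-form} for the ``in other words'' clause are welcome clarifications but do not depart from the paper's route.
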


We remark that the equivalence of (i) and (iii) in Lemma
\ref{ip-eq} is originally due to Kelleher et al.\
\cite[Corollary 5.4]{KMOW}.
The proof given above is simpler and more explicit. Moreover,
the equivalences of Lemma \ref{ip-eq}, taken as a whole,
provide a complete description of the nontrivial $2$-polygonal
equalities that can be admitted in real or complex inner
product spaces. One consequence of this description is the
following surprising corollary.

\begin{corollary}
Suppose that $X, Z$ are finite subsets of a real inner product
space $Y$. If $X$ is affinely dependent and $Z$ is affinely
independent, then $2 = \wp(X) < \wp(Z)$.
\end{corollary}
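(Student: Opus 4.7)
The plan is to combine the equivalences provided by Lemma~\ref{ip-eq} with Theorem~\ref{3=7} and the Li--Weston interval characterization (\ref{LW-str}), layered on top of Schoenberg's classical embedding theorem.

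First, I will observe that since $X$ and $Z$ are finite subsets of the real inner product space $Y$, each lies in a finite-dimensional subspace, which is in particular a Hilbert space. By Schoenberg's classical result, both $X$ and $Z$ are therefore of $2$-negative type, so $\wp(X) \ge 2$ and $\wp(Z) \ge 2$. This gives the baseline lower bound on the two invariants.

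Next, I will exploit affine dependence of $X$. By the equivalence of (i) and (iii) in Lemma~\ref{ip-eq}, $X$ admits a nontrivial $2$-polygonal equality. Theorem~\ref{3=7} then forces $X$ to fail to be of strict $2$-negative type, and (\ref{LW-str}) immediately gives $\wp(X) \le 2$. Combined with the previous step, this pins $\wp(X) = 2$.

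Symmetrically, affine independence of $Z$ together with Lemma~\ref{ip-eq} guarantees that $Z$ does \emph{not} admit any nontrivial $2$-polygonal equality; Theorem~\ref{3=7} then yields that $Z$ is of strict $2$-negative type, so (\ref{LW-str}) delivers $\wp(Z) > 2$. Chaining these two conclusions produces $2 = \wp(X) < \wp(Z)$, as required. There is really no serious obstacle here; the only subtlety is making sure Schoenberg can be invoked, which is handled by passing to the (finite-dimensional) span of the finite set.
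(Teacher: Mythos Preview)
Your proof is correct and follows essentially the same line as the paper's. The paper's own proof is the one-line ``This follows from Lemma~\ref{ip-eq} and Li and Weston \cite[Corollary~4.3]{LiW}''; you have simply unpacked that citation, using Lemma~\ref{ip-eq} to pass between affine (in)dependence and nontrivial $2$-polygonal equalities, then Theorem~\ref{3=7} together with (\ref{LW-str}) to locate $\wp$, with the Schoenberg step supplying the baseline $\wp \ge 2$.
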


\begin{proof}
This follows from Lemma \ref{ip-eq} and
Li and Weston \cite[Corollary 4.3]{LiW}
\end{proof}

On the basis of Lemmas \ref{ip-lem} through \ref{ip-eq}
we are now able to provide
a complete description of the nontrivial $1$-polygonal
equalities that can be admitted in weighted Hamming cubes.

For the remainder of this section let $\WS = [w_{1}, \ldots, w_{n}]$ denote a set of positive weights, so that $H_\WS \subseteq \mR^n$.

\begin{theorem}\label{1-poly}
    Suppose that $(X = \{\vecx_0,\dots,\vecx_m\},d)$ is a metric subspace of $H_\WS$. Then the following conditions are equivalent for each $\bxi \in F_0$.
    \begin{enumerate}[(i)]
    \item $\sum_{k=0}^m \xi_k \vecx_k = \zero$.
    \item $\ip<D_X \bxi,\bxi> = 0$.
    \item $D_X \bxi = \zero$.
    \end{enumerate}
\end{theorem}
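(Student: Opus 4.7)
The plan is to handle the equivalences in a cycle (i) $\Rightarrow$ (iii) $\Rightarrow$ (ii) $\Rightarrow$ (i), where the implication (iii) $\Rightarrow$ (ii) is immediate and the other two implications are already essentially implicit in the machinery set up earlier in the paper.

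For the equivalence (i) $\Leftrightarrow$ (ii), I would exploit the natural $S$-embedding $\iota : H_\WS \to \mathbb{R}^n$ defined by $\iota(\vecx) = W^{-1/2}\vecx$ introduced in Section~\ref{S:M-const}. Applying Lemma~\ref{s-embed} to $\iota$ gives, for each $\bxi \in F_0$,
\[
\ip<D_X \bxi, \bxi> = -2 \snorm{\sum_{k=0}^m \xi_k \iota(\vecx_k)}_2^2 = -2 \snorm{W^{-1/2} \sum_{k=0}^m \xi_k \vecx_k}_2^2.
\]
Since $W^{-1/2}$ is invertible on $\mathbb{R}^n$, this quantity vanishes precisely when $\sum_k \xi_k \vecx_k = \zero$, giving (i) $\Leftrightarrow$ (ii) (and simultaneously (ii) $\Rightarrow$ (i)).

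The only remaining work is (i) $\Rightarrow$ (iii). I would do this by a direct calculation of $(D_X\bxi)_i = \sum_k \xi_k \, d(\vecx_i, \vecx_k)$ in coordinates, mirroring the computation in the proof of Theorem~\ref{AI-iff-invert}, but this time without first translating $\vecx_0$ to $\zero$. Writing $\vecx_j = \sum_\ell w_\ell\, b_{j,\ell}\, \vece_\ell$ with $b_{j,\ell} \in \{0,1\}$, the hypothesis $\sum_k \xi_k \vecx_k = \zero$ gives $\sum_k \xi_k b_{k,\ell} = 0$ for every $\ell$, and the hypothesis $\bxi \in F_0$ gives $\sum_k \xi_k = 0$. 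Then using the identity $|b - b'| = b + b' - 2bb'$ for $b, b' \in \{0,1\}$:
\[
(D_X \bxi)_i = \sum_\ell w_\ell \sum_k \xi_k (b_{i,\ell} + b_{k,\ell} - 2 b_{i,\ell} b_{k,\ell}) = \sum_\ell w_\ell \Bigl[ b_{i,\ell}\!\!\sum_k \xi_k + \sum_k \xi_k b_{k,\ell} - 2 b_{i,\ell}\!\!\sum_k \xi_k b_{k,\ell}\Bigr] = 0,
\]
each of the three inner sums vanishing by one of the two hypotheses. Hence $D_X \bxi = \zero$.

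There is no real obstacle here: the main conceptual content sits in Lemma~\ref{s-embed} (which transports the $\ell^1$ structure on $H_\WS$ to the Euclidean structure on $\iota(H_\WS)$), together with the bilinear identity $|b - b'| = b + b' - 2bb'$ that makes the $\ell^1$ distance behave like an inner-product expression on $\{0,1\}$-coordinates. The mild nuisance to watch for is that one should not assume $\vecx_0 = \zero$ in the direct computation, since the reflections that allow such a normalization do not preserve the linear relation in (i); working in the coordinates $b_{j,\ell}$ avoids this issue cleanly.
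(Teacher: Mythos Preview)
Your proof is correct and follows essentially the same route as the paper: the equivalence (i) $\Leftrightarrow$ (ii) via the natural $S$-embedding (the paper cites Lemma~\ref{ip-lem} directly rather than Lemma~\ref{s-embed}, but this amounts to the same thing), and (i) $\Rightarrow$ (iii) by the same coordinate computation with the identity $|b-b'| = b+b'-2bb'$. One small correction to your closing remark: since $\bxi \in F_0$, the condition $\sum_k \xi_k \vecx_k = \zero$ is actually invariant under affine maps (translations contribute $\bigl(\sum_k \xi_k\bigr)\vecc = \zero$), and hence under the reflections of $H_\WS$; so normalizing $\vecx_0 = \zero$ would have been permissible, though working without it as you do is equally clean.
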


\begin{proof}
If we let $\iota$ denote the natural $S$-embedding of $H_{\WS}$
into $\mathbb{R}^{n}$, then $D_X = D_{\iota(X),2}$.
Furthermore, as a map from $\mathbb{R}^{n}$ to $\mathbb{R}^{n}$,
$\iota$ is linear and invertible. Now let $\bxi \in F_{0}$
be given and apply Lemma \ref{ip-lem} to conclude that
    \[
    \ip<D_{X} \bxi,\bxi> = \ip<D_{\iota(X),2} \bxi,\bxi>
    = -2 \snorm{\sum\limits_{i} \xi_{i}\iota(\vecx_{i})}_{2}^{2}
    = -2 \snorm{\iota
    \biggl(\sum\limits_{i} \xi_{i}\vecx_{i}\biggr)}_{2}^{2}.
    \]
    Thus $\ip<D_{X} \bxi,\bxi> = 0$ if and only if
    $\sum\limits_{i} \xi_{i}\vecx_{i} = 0$.

Suppose now that $\sum\limits_{i} \xi_{i}\vecx_{i} = 0$. For $0 \le i \le m$ and $1 \le k \le n$ let $x_{i,k} = w_k b_{i,k}$ denote the $k$th component of $\vecx_i$. Then, for $1 \le k \le n$
  \[ \sum_{j=0}^m x_{j,k} \, \xi_j 
       = \sum_{j=0}^m w_k \,b_{j,k} \,\xi_j = 0. \]
Using the properties of binary digits as in the proof of Theorem~\ref{AI-iff-invert}, we see that if $0 \leq i \leq m$,
the $i$th component of $D_{X}\bxi$ is given by
\begin{eqnarray*}
  \sum\limits_{j = 0}^{m}
    \norm{ \vecx_{i} - \vecx_{j} }_{1} \xi_{j}  
    & = & \sum\limits_{j = 0}^{m} \sum\limits_{k = 1}^{n}
                    w_k |b_{i,k} - b_{j,k}| \,\xi_{j} \\
    & = & \sum\limits_{j = 0}^{m}
    \left( \sum\limits_{k = 1}^{n} w_k
    \big(b_{i,k} + b_{j,k} - 2 b_{i,k} b_{j,k}\big)
    \right) \xi_{j} \\
    & = & \sum\limits_{j = 0}^{m} \xi_{j} \left(
    \sum\limits_{k = 1}^{n} w_k b_{i,k} \right) +
    \sum\limits_{k = 1}^{n} \left( \sum\limits_{j = 0}^{m}
     w_k  \, b_{j,k} \, \xi_{j} \right) 
      - 2 \sum\limits_{k = 1}^{n}
    \left( b_{i,k}  
       \sum\limits_{j = 0}^{m} w_k \, b_{j,k} \, \xi_{j} \right) \\
    & = & 0.
\end{eqnarray*}
As this is true for all $i$ we conclude
that $D_{X}\bxi = \zero$.

Clearly if $D_X \bxi = \zero$ then $\ip<D_X\bxi,\bxi> = 0$ so this completes the proof.
\end{proof}

\begin{corollary}\label{s-san-i}
    Suppose that $(X,d)$ is a
    metric subspace of $H_\WS$. Then
    $\ker (D_{X}) = \mathcal{F}_{0}(X)$.
\end{corollary}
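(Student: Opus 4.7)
The corollary is essentially a repackaging of Theorem~\ref{1-poly} once one establishes the auxiliary containment $\ker(D_X) \subseteq F_0$. My plan is therefore two short steps.

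First, I would dispose of the forward inclusion $\mathcal{F}_0(X) \subseteq \ker(D_X)$. If $\bxi \in \mathcal{F}_0(X)$, then by definition $\bxi \in F_0$ and $\sum_i \xi_i \vecx_i = \zero$, i.e.\ condition (i) of Theorem~\ref{1-poly} holds. The equivalence (i) $\Leftrightarrow$ (iii) in that theorem then yields $D_X \bxi = \zero$, so $\bxi \in \ker(D_X)$.

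For the reverse inclusion, the only substantive issue is that a priori an element $\bxi$ of $\ker(D_X)$ need not lie in $F_0$, and Theorem~\ref{1-poly} only applies to vectors in $F_0$. This is where I would invoke Sánchez's result (cited in the paragraph closing Section~\ref{S:Background} subsection on polygonal equalities): since $H_\WS \subseteq \ell^1$, the space $(X,d)$ is of $1$-negative type, and therefore by \cite[Corollary~2.5]{San} we have $\ker(D_X) \subseteq F_0$. Given that, any $\bxi \in \ker(D_X)$ automatically satisfies condition (iii) of Theorem~\ref{1-poly} while lying in $F_0$, hence by the equivalence (iii) $\Leftrightarrow$ (i) in that theorem we get $\sum_i \xi_i \vecx_i = \zero$, so $\bxi \in \mathcal{F}_0(X)$.

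There is no real obstacle here, as Theorem~\ref{1-poly} has already done all the heavy lifting; the entire argument is a one-line citation of Sánchez combined with the two pertinent equivalences of the preceding theorem. The only thing worth flagging is that one must explicitly justify why $\ker(D_X) \subseteq F_0$ before Theorem~\ref{1-poly} can be applied in the reverse direction, and the paper has already supplied the appropriate reference.
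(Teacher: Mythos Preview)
Your proof is correct and uses the same two ingredients as the paper's argument (Theorem~\ref{1-poly} and S\'anchez's \cite[Corollary~2.5]{San}), but your path is more direct. The paper first disposes of the case $\ker(D_X)=\{\zero\}$, and in the nontrivial case routes through Theorem~\ref{AI-iff-invert} (to get $X$ affinely dependent) and Theorem~\ref{Mur-plus} (to get $\wp(X)=1$) before invoking S\'anchez; you instead apply S\'anchez immediately from the bare fact that $(X,d)$ has $1$-negative type, which is all that result actually requires. Your streamlining is legitimate and avoids two citations that are not really needed here.
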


\begin{proof}
    By Theorem \ref{1-poly}, we have $\mathcal{F}_{0}(X)
    \subseteq \ker (D_{X})$. Since obviously
    $\bxi = \zero \in \mathcal{F}_{0}(X)$, it follows
    that $\ker (D_{X}) = \{ \zero \}$ if and only if
    $\mathcal{F}_{0}(X) = \{ \zero \}$.

    Now suppose that $\ker (D_{X})$ is nontrivial.
    In other words, assume that $\det(D_{X}) = 0$.
    Then $X$ is affinely dependent by Theorem
    \ref{AI-iff-invert}. This entails that $\wp(X) = 1$
    by Theorem \ref{Mur-plus}.
    Hence $\ker (D_{X}) \subseteq F_{0}$ by S\'{a}nchez
    \cite[Corollary 2.5 (i)]{San}. In particular,
    each $\bxi \in \ker (D_{X})$ satisfies
    $\sum_{k=0}^m \xi_k \vecx_k = \zero$ by Theorem
    \ref{AI-iff-invert}. So, in fact,
    $\ker (D_{X}) \subseteq \mathcal{F}_{0}(X)$.
    This completes the proof.
\end{proof}

\begin{remark}\label{s-san-ii}
In the setting of Corollary \ref{s-san-i}, if $\wp(X) = 1$,
then $\det (D_{X}) = 0$ (and so $\ker(D_X)$ is nontrivial)
by Theorems \ref{Mur-plus} and \ref{AI-iff-invert}
(in that order). In particular, each nonzero
$\bxi \in \ker(D_X)$ then provides a nontrivial $1$-polygonal
equality $\ip<D_X \bxi,\bxi> = 0$ by Theorem \ref{1-poly}.

In contrast, as the following example shows, one can have $\sum_{k=0}^m \xi_k \vecx_k = \zero$ or $\ip<D_X \bxi,\bxi> = 0$ with $\bxi \not\in F_0$. 
\end{remark}

\begin{example}
Let the set
  \[ X = \{ \vecx_0,\vecx_1,\vecx_2,\vecx_3\} = \left\{ \binom{0}{0}, \binom{1}{0}, \binom{0}{2}, 
            \binom{1}{2} \right\} \subseteq \mR^2 \]
be endowed with the $\ell^1$ metric.
To see that one can't drop the 
hypothesis that $\bxi \in F_0$ in the theorem observe that if $\bxi = (1,0,0,0)^T$, then $\sum_k \xi_k \vecx_k = \zero$ but $D_X \bxi \ne \zero$. If $\bxi = (1,-1,0,1)^T$, then $\ip<D_X \bxi,\bxi> = 0$ but $\sum_k \xi_k \vecx_k \ne \zero$ and $D_X \bxi \ne \zero$.
\end{example}

\begin{example}\label{Ex6.7}
Theorem~\ref{1-poly} is dependent on the structure of $H_\WS$
and does not extend to more general subsets of $(\mR^n,\norm{\cdot}_1)$.
To see this let the set
  \[ X = \{ \vecx_0,\vecx_1,\vecx_2,\vecx_3,\vecx_4\} = \left\{ \binom{0}{0}, \binom{1}{0}, \binom{0}{2}, 
            \binom{1}{2}, \binom{2}{2} \right\} \subseteq \mR^2\]
be endowed with the $\ell^1$ metric.
The kernel of $D_X$ is $\mathrm{span}((1,-1,-1,1,0)^T) \subseteq F_0$. We note that $\bxi = (2,-2,-1,0,1)^T$ is an element of $F_0$ for which $\sum_k \xi_k \vecx_k = \zero$ but for which $D_X \bxi \ne \zero$ and $\ip<D_X\bxi,\bxi> \ne 0$. 
\end{example}

\begin{example} If $p > 1$ the set of $\bxi \in F_0$ for which $\ip<D_p \bxi,\bxi> = 0$ is in general not a linear subspace. Let $X$ be the star graph on 4 vertices,
  \[ X = \left\{ \tri(0,0,0) , \tri(1,0,0) \tri(0,1,0), \tri(0,0,1) 
           \right\} \subseteq H_3. \]
It is known that $\wp(X) = \frac{\ln 3}{\ln 2} \approx 1.585$. Taking $p = 2 > \wp(X)$ one finds that $\ip<D_2 \bxi,\bxi> = 0$ if and only if
   $ \bxi = (-2s -2t  \pm 2\sqrt{st},s,t,s+t \mp 2\sqrt{st})^T $ 
with $st \ge 0$, which does not form a  subspace of
$\mR^4$.
\end{example}

Under the hypotheses of Theorem~\ref{omnibus}, we saw that $D_X$ is invertible if and only if $X = \{\vecx_0,\dots,\vecx_m\}$ is affinely independent. This can be rephrased as saying that $\rank(D_X) = m+1$ if and only if $\dimsp(\vecx_1-\vecx_0,\dots,\vecx_m-\vecx_0) = m$. Theorem~\ref{1-poly} allows us to generalize this to provide a quantitative relationship between the rank of $D_X$ and the linear algebraic structure of $X$.

\begin{theorem}\label{rank-thm}
Suppose that $X = \{\vecx_0,\vecx_1,\dots,\vecx_m\} \subseteq H_\WS$ with the usual $\ell^{1}$ metric.  Then
  \begin{equation}\label{rank-ident}
    \mathrm{rank}(D_X) 
         = \dimsp(\vecx_1-\vecx_0,\dots,\vecx_m-\vecx_0) +1. 
  \end{equation}
\end{theorem}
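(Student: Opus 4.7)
The plan is to reduce the rank computation to a dimension count of $\mathcal{F}_0(X)$ via Corollary~\ref{s-san-i}, and then to identify $\mathcal{F}_0(X)$ as the kernel of a matrix whose rank is easy to read off. Let $r = \dimsp(\vecx_1-\vecx_0,\dots,\vecx_m-\vecx_0)$. Since $D_X$ is an $(m+1)\times(m+1)$ matrix, the rank--nullity theorem gives $\rank(D_X) = (m+1) - \dim\ker(D_X)$, so it suffices to show that $\dim\ker(D_X) = m-r$.

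Corollary~\ref{s-san-i} identifies $\ker(D_X)$ with
\[
\mathcal{F}_0(X) = \Bigl\{\bxi = (\xi_0,\dots,\xi_m)^T \in \mR^{m+1} \st \sum_{k=0}^m \xi_k = 0 \text{ and } \sum_{k=0}^m \xi_k \vecx_k = \zero \Bigr\}.
\]
This is precisely the kernel of the $(n+1)\times(m+1)$ block matrix
\[
M = \begin{pmatrix} 1 & 1 & \cdots & 1 \\ \vecx_0 & \vecx_1 & \cdots & \vecx_m \end{pmatrix},
\]
whose $k$th column is $\binom{1}{\vecx_k}$.

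The main computation is to show that $\rank(M) = r+1$. For this I would perform the elementary column operation of subtracting the first column from each of the others, which does not change the column space. This yields a matrix whose columns are $\binom{1}{\vecx_0}$ and $\binom{0}{\vecx_k-\vecx_0}$ for $k=1,\dots,m$. Because $\binom{1}{\vecx_0}$ is the unique column with a nonzero entry in the first row, it is linearly independent of the remaining columns, and the span of those remaining columns has dimension exactly $\dimsp(\vecx_1-\vecx_0,\dots,\vecx_m-\vecx_0) = r$. Hence $\rank(M) = 1 + r$, and applying rank--nullity to $M$ gives $\dim\ker(M) = (m+1) - (r+1) = m-r$.

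Combining these steps yields $\dim\ker(D_X) = \dim\mathcal{F}_0(X) = m-r$, and therefore $\rank(D_X) = (m+1) - (m-r) = r+1$, which is the identity (\ref{rank-ident}). There is no serious obstacle here once Corollary~\ref{s-san-i} is in hand; the only subtle point is the clean identification of $\mathcal{F}_0(X)$ with $\ker(M)$ and the verification that the column rank of $M$ is precisely $r+1$, both of which are straightforward linear algebra.
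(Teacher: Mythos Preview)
Your proof is correct and follows essentially the same strategy as the paper's: both arguments rest on the identification $\ker(D_X)=\mathcal{F}_0(X)$ (Corollary~\ref{s-san-i}/Theorem~\ref{1-poly}) and then reduce the question to a nullity count against the affine data of $X$. Your packaging via the augmented matrix $M$ with columns $\binom{1}{\vecx_k}$ is a bit cleaner than the paper's version, which first normalizes $\vecx_0=\zero$, works with the matrix $A$ of columns $\vecx_j$, and then builds explicit inverse maps $U,V$ between $\ker(D_X)$ and a complement of $\mathrm{span}(\vece_1)$ inside $\ker(A)$; by absorbing the constraint $\sum_k\xi_k=0$ into the extra row of $1$'s you get $\ker(D_X)=\ker(M)$ in one stroke and avoid both the WLOG reduction and the isomorphism bookkeeping.
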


\begin{proof} Both sides of (\ref{rank-ident}) are invariant under the reflection maps considered in Section~\ref{S:HammingCubes}, so without loss of generality we may assume that $\vecx_0 = \zero$.
Let $A$ be the $n \times (m+1)$ matrix whose $j$th column is $\vecx_j$ so that the right-hand side of (\ref{rank-ident}) is $\rank(A)+1$.
Since $D_X$ and $A$ both have $m+1$ columns, it will suffice then to show that $\nullity(D_X) + 1 = \nullity(A)$. Since the first standard basis vector $\vece_1$ lies in $\ker(A)$ we can write $\ker(A) = \mathrm{span}(\vece_1) \oplus \mathcal{N}$ where $\mathcal{N} = \{\bxi = (\xi_0,\dots,\xi_m)^{T} \in
\ker(A) \st \xi_0 = 0\}$. 

For $\bxi \in \ker(D_X)$ let $U \bxi = \bxi - \ip<\bxi,\vece_1> \vece_1 = (0,\xi_1,\dots,\xi_m)^{T}$. Then, by
Theorem~\ref{1-poly},
  \[ A U \bxi = A \bxi - \ip<\bxi,\vece_1> A \vece_1 = \zero \]
and so $U\bxi \in \mathcal{N}$. Conversely, for $\bxi \in \mathcal{N}$ let $V \bxi = \left(-\sum\limits_{i=1}^m \xi_i \right) \vece_1 + \bxi = \bigl(-\sum\limits_{i=1}^m \xi_i,\xi_1,\dots,\xi_m \bigr)^{T}$. Then
$V \bxi \in F_0$ and 
  \[ A V\bxi = \left(-\sum\limits_{i=1}^m \xi_i \right) A \vece_1 - A \bxi = \zero \]
so applying Theorem~\ref{1-poly} again we see that $D_X V\bxi = \zero$. 

It is easy to check that $VU$ and $UV$ are just the identity maps on $\ker(D_X)$ and $\mathcal{N}$ and so these two vector spaces are isomorphic. Thus $\nullity(A) = 1 + \dim(\mathcal{N}) = 1+\nullity(D_X)$, as required.
\end{proof} 

Again this surprising result is rather specific to weighted Hamming cubes. Note that in particular it shows that for subsets of $H_\WS \subseteq \mR^n$, the rank of the distance matrix is never larger than $n+1$. For the subset of $\mR^2$ given in Example~\ref{Ex6.7}, $\rank(D_X) = 4$ and this shows that Theorem~\ref{rank-thm} can not be extended to $(\mR^2,\norm{\cdot}_1)$.

\begin{theorem}\label{affine-S}
    Let $(X,d) =
    (\{ \vecx_{0}, \vecx_{1}, \ldots , \vecx_{m} \}, d)$ be 
    a metric subset of $H_{\WS}$.
    Let $Y$ be a real or complex inner product space.
    Let $\iota : H_{\WS} \rightarrow Y$ be an $S$-embedding.
    Then $X$ is affinely dependent if and only if $\iota(X)$
    is affinely dependent.
\end{theorem}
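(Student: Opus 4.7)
The plan is to reduce both affine dependence statements to a single matrix condition on $D_X$, using the identity $D_X = D_{\iota(X),2}$ furnished by the $S$-embedding as the bridge between the two settings.

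First, I would observe that since $\iota$ is an $S$-embedding, $\norm{\iota(\vecx_i) - \iota(\vecx_j)}_2^2 = d(\vecx_i,\vecx_j)$ for all $i,j$, so the $2$-distance matrix of $\iota(X)$ as a subset of $Y$ coincides with the distance matrix of $X$; that is, $D_{\iota(X),2} = D_X$. Applying Lemma~\ref{ip-eq} to the finite set $\iota(X)$ in the inner product space $Y$, one has that $\iota(X)$ is affinely dependent if and only if there exists a nonzero $\bxi \in F_0$ satisfying $\ip<D_{\iota(X),2}\bxi,\bxi> = 0$, which by the above identity is the same as $\ip<D_X\bxi,\bxi> = 0$.

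On the other hand, since $X \subseteq H_\WS \subseteq \mR^n$ is viewed in a real vector space, $X$ is affinely dependent if and only if there exists a nonzero $\bxi \in F_0$ with $\sum_{k=0}^m \xi_k \vecx_k = \zero$. By Theorem~\ref{1-poly}, for any $\bxi \in F_0$ this equation is equivalent to $\ip<D_X\bxi,\bxi> = 0$. Hence $X$ is affinely dependent if and only if there exists a nonzero $\bxi \in F_0$ with $\ip<D_X\bxi,\bxi> = 0$. Both notions of affine dependence thus collapse to the same matrix condition on $D_X$, and the equivalence follows.

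There is no real obstacle; the result is essentially a corollary of the parallel characterizations supplied by Lemma~\ref{ip-eq} (affine dependence in any inner product space, mediated by $D_{\cdot,2}$) and Theorem~\ref{1-poly} (affine dependence in weighted Hamming cubes, mediated directly by $D_X$). The only point that warrants care is that $\iota$ is not assumed to be linear, which is why the argument must go through the quadratic form $\bxi \mapsto \ip<D_X\bxi,\bxi>$ rather than, say, transporting a vanishing affine combination directly across $\iota$.
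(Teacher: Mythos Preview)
Your proof is correct and follows essentially the same approach as the paper's: both reduce the equivalence to the existence of a nonzero $\bxi \in F_0$ with $\ip<D_X\bxi,\bxi>=0$, using Theorem~\ref{1-poly} on the $H_\WS$ side and the inner-product-space characterization on the $\iota(X)$ side. The only cosmetic difference is that the paper cites Lemma~\ref{s-embed} to pass from $\ip<D_X\bxi,\bxi>=0$ to $\sum_i \xi_i\iota(\vecx_i)=\zero$, whereas you invoke the equivalent packaged statement in Lemma~\ref{ip-eq} together with $D_{\iota(X),2}=D_X$.
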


\begin{proof}
    Combining the previous results,
    \begin{align*}
        \text{$X$ is affinely dependent}
          & \iff \text{$\sum_i \xi_i \vecx_i = \zero$ for some nonzero $\bxi \in F_{0}$} \\
          & \iff \text{$\ip<D_X \bxi,\bxi> = 0$ for some
          nonzero $\bxi \in F_{0}$ 
           \quad (by Theorem~\ref{1-poly})} \\
          & \iff \text{$\sum_i \xi_i \iota(\vecx_i) = \zero$ for some nonzero $\bxi \in F_{0}$ \quad
          (by Lemma~\ref{s-embed})} \\
          & \iff \text{$\iota(X)$ is affinely dependent}.
    \end{align*}
\end{proof}

In general, 
the sets $\mathcal{F}_0(X)$ and $\mathcal{F}_0(\iota(X))$ may be distinct.
However, in the case of
the natural $S$-embedding
$\iota : H_{\WS} \rightarrow \mathbb{R}^{n}$,
these sets coincide. In general, we have the following
theorem.

\begin{theorem}\label{s-embedding}
    Let $(X,d) =
    (\{ \vecx_{0}, \vecx_{1}, \ldots , \vecx_{m} \}, d)$ be 
    a metric subset of $H_{\WS}$.
    Let $Y$ be a real or complex inner product space.
    Let $\iota : H_{\WS} \rightarrow Y$ be an $S$-embedding.
    Then
    \[ \mathcal{F}_0(\iota(X)) 
        = \{\bxi \in F_0 \st \ip<D_X \bxi,\bxi> = \zero\} 
         = \ker(D_X) = \mathcal{F}_0(X) . \]
\end{theorem}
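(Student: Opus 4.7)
The plan is to chain together three equalities, each of which follows almost immediately from results already established in Section~\ref{S:Poly-ineq}. The key structural input is that $\iota : H_\WS \to Y$ is an $S$-embedding, so $\norm{\iota(\vecx_i) - \iota(\vecx_j)}_2^{2} = d(\vecx_i,\vecx_j)$ and hence $D_{\iota(X),2} = D_X$. Everything below pivots on this identification.

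For the first equality $\mathcal{F}_0(\iota(X)) = \{\bxi \in F_0 \st \ip<D_X \bxi,\bxi> = 0\}$, I would apply Lemma~\ref{ip-lem} (or equivalently the equivalence of (iii) and (iv) in Lemma~\ref{ip-eq}) to the finite subset $\iota(X)$ of the inner product space $Y$. That result characterizes $\mathcal{F}_0$ of a subset of an inner product space exactly as the zero set of the quadratic form $\bxi \mapsto \ip<D_{\iota(X),2}\bxi,\bxi>$ on $F_0$. Substituting $D_{\iota(X),2} = D_X$ then yields the equality.

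For the second equality $\{\bxi \in F_0 \st \ip<D_X \bxi,\bxi> = 0\} = \ker(D_X)$, I would invoke Theorem~\ref{1-poly}: for $\bxi \in F_0$, the conditions $\ip<D_X \bxi,\bxi> = 0$ and $D_X \bxi = \zero$ are equivalent, which gives the inclusion $\subseteq$ together with one half of $\supseteq$ (namely that any $\bxi \in \ker(D_X) \cap F_0$ lies in the left-hand set). The remaining point is the reverse inclusion $\ker(D_X) \subseteq F_0$; this is not automatic, but it is exactly the content of Corollary~\ref{s-san-i}, which already asserts $\ker(D_X) = \mathcal{F}_0(X) \subseteq F_0$. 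The third equality $\ker(D_X) = \mathcal{F}_0(X)$ is then literally a restatement of Corollary~\ref{s-san-i}.

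I do not anticipate any substantive obstacle: the theorem is an assembly of prior results. The one delicate point to flag is that without the inclusion $\ker(D_X) \subseteq F_0$ supplied by Corollary~\ref{s-san-i}, the second equality would fail a priori, since the set on the left is restricted to $F_0$. Once that inclusion is in hand, the entire chain is immediate, and it is natural to present the proof as a single display of biconditionals in the style of Theorem~\ref{affine-S}.
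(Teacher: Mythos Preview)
Your proposal is correct and follows essentially the same route as the paper: the first equality via Lemma~\ref{ip-lem} together with $D_{\iota(X),2}=D_X$, and the remaining two via Theorem~\ref{1-poly} and Corollary~\ref{s-san-i}. Your explicit identification of where the inclusion $\ker(D_X)\subseteq F_0$ is needed is, if anything, more careful than the paper's brief appeal to Corollary~\ref{s-san-i} and Remark~\ref{s-san-ii}.
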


\begin{proof}
    As $\iota$
    is an $S$-embedding, we have
    $\ip<D_{X} \bxi,\bxi> = \ip<D_{\iota(X),2} \bxi,\bxi>$
    for each $\bxi \in F_{0}$.
    Now apply Lemma~\ref{ip-lem} to $\iota(X)$ as a
    metric subset of $Y$ to give the first equality.
    The remaining equalities follow from Corollary
    \ref{s-san-i} and Remark \ref{s-san-ii}.
\end{proof}

We remark that in the setting of Theorem \ref{s-embedding}
one may also conclude that
$\mathcal{F}_0(X) = \mathcal{F}_0(\hatX)$.

We complete this section by noting an application of Lemma
\ref{ip-lem} to the determination of $2$-negative type gaps
in inner product spaces.

\begin{theorem}
    Let $(Y, \ip<\cdot\,, \cdot>)$ be a real or complex
    inner product space. For each finite subset
    $X = \{ \vecx_{0}, \vecx_{1}, \ldots, \vecx_{m}\} \subseteq Y$
    we have:
    \[
    \Gamma_{2}(X) = \min \left\{ 4 \cdot \snorm{\sum\limits_{i} \xi_{i}\vecx_{i}}_{2}^{2}
    : \bxi = (\xi_{0}, \xi_{1}, \ldots, \xi_{m})^{T}
    \in F_{0} \text{ and } \| \bxi \|_{1} = 1 \right\}.
    \]
\end{theorem}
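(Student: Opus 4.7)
The plan is to combine the identity from Lemma \ref{ip-lem} with a straightforward compactness argument. First, note that since $X$ is a subset of an inner product space, $(X,d)$ embeds isometrically in a Hilbert space and hence is of $2$-negative type, so $\Gamma_2(X) \ge 0$ is well-defined. By its definition, $\Gamma_2(X)$ is the largest non-negative constant $\Gamma$ for which
\[
\Gamma \|\bxi\|_1^2 + 2\ip<D_{X,2} \bxi,\bxi> \le 0
\]
holds for every $\bxi \in F_0$.

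Next, I would apply Lemma~\ref{ip-lem} directly: for each $\bxi \in F_0$ we have $2\ip<D_{X,2}\bxi,\bxi> = -4 \snorm{\sum_i \xi_i \vecx_i}_2^2$. Substituting this in, the defining inequality becomes
\[
\Gamma \|\bxi\|_1^2 \le 4 \snorm{\sum_i \xi_i \vecx_i}_2^2 \qquad \text{for all } \bxi \in F_0.
\]
The largest $\Gamma$ for which this holds is
\[
\Gamma_2(X) = \inf\left\{ \frac{4 \snorm{\sum_i \xi_i \vecx_i}_2^2}{\|\bxi\|_1^2} : \bxi \in F_0,\ \bxi \ne \zero \right\},
\]
and by positive homogeneity of both numerator and denominator we may normalize $\|\bxi\|_1 = 1$.

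Finally, the restricted set $S = \{\bxi \in F_0 : \|\bxi\|_1 = 1\}$ is closed and bounded in the finite-dimensional space $F_0 \subseteq \mR^{m+1}$, hence compact, and the function $\bxi \mapsto 4 \snorm{\sum_i \xi_i \vecx_i}_2^2$ is continuous. Therefore the infimum is attained and equals a minimum, yielding the stated formula. There is really no main obstacle here; the only subtle point to verify is the passage from infimum to minimum, which is immediate from compactness of the unit sphere in $(F_0, \|\cdot\|_1)$.
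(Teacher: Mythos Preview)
Your proof is correct and follows exactly the route the paper takes: apply Lemma~\ref{ip-lem} to rewrite (\ref{p-gap}) and rearrange. You have in fact been more careful than the paper, which simply says ``by a simple rearrangement'' without explicitly noting the homogeneity normalization or the compactness argument that upgrades the infimum to a minimum.
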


\begin{proof}
    This follows from (\ref{p-gap}) and Lemma \ref{ip-lem}
    by a simple rearrangement.
\end{proof}


\section{Final remarks and open problems}\label{S:Examples}

We have seen that many of the implications included in Theorem~\ref{omnibus} are in fact valid in greater generality than the setting of weighted Hamming cubes, so it is natural to ask whether one might be able to relax the hypothesis in that theorem.
One might ask, for example, whether the theorem is valid for finite subsets of $(\mR^n,\norm{\cdot}_1)$.

Noting that conditions (3), (4), (5), (6) and (7) are equivalent in any finite metric spaces, our aim in this section is to clarify the relationships between conditions (1), (2), (3) in wider classes of spaces than weighted Hamming cubes, namely, finite subsets of $(\mR^n,\norm{\cdot}_1)$, finite spaces of $1$-negative type, and general finite metric spaces. We shall examine here each of the 3 pairs of conditions. (Of course, Condition~(1) only makes sense in the first of these wider classes.)

\subsection{(2) invertibility of $D_X$ and (3) strict $1$-negative type}

We saw in Theorem~\ref{DXInv-iff-strict} that these two conditions are equivalent for finite subsets of $(\mR^n,\norm{\cdot}_1)$. Indeed it follows from S{\'a}nchez's formula that (3) implies (2) in any finite metric space. On the other hand, Example~\ref{Inv-not-strict} shows that there are spaces of $1$-negative type in which Condition~(2) does not imply Condition~(3).

\subsection{(1) Affine independence and (3) strict $1$-negative type}\label{(1)&(3)}
That Condition~(1) implies Condition~(3) for finite subsets of $(\mR^n,\norm{\cdot}_1)$ is \cite[Corollary~4.10]{KMOW}. To see that the converse implication fails for such spaces, consider 
  \[ X = \left\{ \binom{0}{0}, \binom{1}{0}, \binom{1}{2}, \binom{2}{2} \right\} \subseteq (\mR^2,\norm{\cdot}_1). \]
Then $X$ is certainly affinely dependent but
  \[ D_X = \begin{pmatrix}
      0 & 1 & 3 & 4 \\
      1 & 0 & 2 & 3 \\
      3 & 2 & 0 & 1 \\
      4 & 3 & 1 & 0
  \end{pmatrix}\]
which is invertible. Further, $\ip<D_X^{-1} \one,\one> = \frac{506}{4727} > 0$. Noting that $(X,d)$ is of $1$-negative type,
by S{\'a}nchez's formula, we see that the supremal $p$-negative type can't be $1$, and so $(X,d)$ is of strict $1$-negative type. That is, this space satisfies Condition~(3), but not
Condition~(1).

\subsection{(1) Affine independence and (2) invertibility of $D_X$}
Since Conditions (2) and (3) are equivalent for finite subsets of $(\mR^n,\norm{\cdot}_1)$ it follows from the statements in Subsection~\ref{(1)&(3)} that Condition~(1) implies Condition~(2) in this setting, but that that converse implication fails.

There remain a few questions which the authors have been unable to answer. If $(X,d)$ is an $(n+1)$-point metric tree with integer edge weights $\WS = [w_1,\dots,w_n]$, then (from \ref{1-gap}) the $1$-negative type gap satisfies
  \[ \frac{\prod_{k=1}^n w_k }{\Gamma_1(X)}
      = \Bigl(\prod_{k=1}^n w_k\Bigr) \Bigl(\sum_{k=1}^n w_k^{-1} \Bigr) \]
which is always an integer whose value clearly only depends on the weights and not the structure of the tree. The right-hand side of this equation is half the sum of the $(n-1)$-dimensional volumes of the faces of $H_\WS$ and so
  \[ \frac{\Gamma_1(X)}{2} = \frac{\text{Volume of $H_\WS$}}{\text{Volume of faces of $H_\WS$}}.
  \]

Numerical experiments indicate that if $X$ is an affinely independent $(n+1)$-point subset of $H_\WS$  and the weights are positive integers then (using the notation from Section~\ref{S:HammingCubes})
  \[ \frac{\bigl(\prod_{k=1}^n w_k \bigr) \det(\hatB)^2}{\Gamma_1(X)} \]
is also always an integer (and one needs the determinant factor for this to be true). It would be interesting to attach some meaning to this quantity and thereby obtain a geometric formula for the $1$-negative type gap in this setting.

In Section~\ref{S:Background} we recalled that if $(X, d)$ is
a finite metric space with $\wp(X)$ finite, then
\begin{eqnarray}\label{interval}
\{p > 0 \st \text{$(X,d)$ admits a nontrivial $p$-polygonal equality}\} & = & [\wp(X),\infty).
\end{eqnarray}
In Section~\ref{S:Poly-ineq} we showed that a subset $X$ of
a weighted Hamming cube $H_\WS$
admits a nontrivial $1$-polygonal equality
if and only if $X$ is affinely dependent. For each $p > 1$ it
remains an open problem to characterize the subsets of $H_\WS$
that admit a nontrivial $p$-polygonal equality.
By \ref{interval} and Theorem~\ref{1-poly},
if $p > 1$ and if $X \subseteq H_\WS$ is
affinely dependent, then $X$ admits a nontrivial $p$-polygonal
equality. The converse will also be true if $p>1$ is sufficiently
close to $1$. This is simply because $H_\WS$ has only finitely
many
subsets. There will also exist a smallest $p \in (1, \infty)$
such that every non-ultrametric metric subspace of $H_\WS$
admits a nontrivial $p$-polygonal equality. Necessarily,
this value of $p$ will equal $\wp(X)$ for some set
$X \subseteq H_\WS$.

More generally, if we let $\mathcal{X}_{p}$ denote the set
of subsets of $H_\WS$ that admit a nontrivial $p$-polygonal
equality, there will exist finitely many real numbers
$1 = p_{0} < p_{1} < \dots < p_{l}$ such that
$\mathcal{X}_{p_{0}}, \ldots, \mathcal{X}_{p_{l}}$ are
pairwise distinct sets and
\[
\mathcal{X}_{p} = \begin{cases}
\mathcal{X}_{p_{k-1}} & \text{ if } p \in [p_{k - 1}, p_{k}) \\
\mathcal{X}_{p_{l}}   & \text{ if } p \in [p_{l}, \infty).
\end{cases}
\]
By \ref{interval}, we will have
$\mathcal{X}_{p_{0}} \subseteq \mathcal{X}_{p_{1}} \subset
\ldots \subseteq \mathcal{X}_{p_{l}}$, with all inclusions strict.
The determination of the real numbers $p_{1}, \ldots, p_{l}$
and characterization of the corresponding sets
$\mathcal{X}_{p_{1}}, \ldots, \mathcal{X}_{p_{l}}$
are open problems, the resolution of which would provide
deep insights into the metric geometry of weighted Hamming
cubes.

\bibliographystyle{amsalpha}

\end{document}